\newtheorem{theorem}{Theorem}[section]
\newtheorem{definition}{Definition}[section]
\newtheorem{proposition}{Proposition}[section]
\newtheorem{example}{Example}[section]
\newtheorem{remark}{Remark}[section]
\numberwithin{equation}{section}
\title{\textbf{Multitime controlled linear PDE systems}}
\author{\textbf{Cristian Ghiu and Constantin Udri\c ste}}
\date{}
\begin{document}

\maketitle

\begin{center}
University Politehnica of Bucharest,
Faculty of Applied Sciences, Department of Mathematics II,
Splaiul Independen\c tei 313, 060042 Bucharest,
Romania, e-mail: crisghiu@yahoo.com
\end{center}

\begin{center}
University Politehnica of Bucharest, Faculty of Applied Sciences,
Department of Mathematics-Informatics I,
Splaiul Independen\c tei 313, 060042 Bucharest, Romania,
e-mail: udriste@mathem.pub.ro, anet.udri@yahoo.com
\end{center}

\begin{abstract}
We derive new results regarding the controllability and the reachability of multitime controlled linear PDE systems of first order.
These systems describe some important multitime evolution in engineering, economics and biology. Some of them come from
evolution PDEs of superior order.
The original results include a refinement and a supplement of multitime optimal control theory,
developed in some recent papers by the second author. They refer to the complete integrability conditions,
conditions for the existence of solutions, path independent curvilinear integrals, the multitime fundamental matrix,
multitime adjoint Cauchy problems, control space, controllability and  reachability of phases,
controllability gramian,  reachability gramian, controllability matrix, counter-examples and commentaries.
\end{abstract}

AMS Subject Classification: 93B05, 82C70, 82B40; 49J20, 49J45, 68U10.

Keywords: multitime controllability, multitime reachability, multitime controllability gramian.

\section{Introduction}

Here a controlled system is a dynamic multitime linear PDE system on which one can act by using appropriate controls.
Among the most common problems that appear when studying such systems are multitime
controllability problem and multitime reachability problem.

The multitime controllability refers to pairs of states that can be moved from the first one to the second one
and the multitime reachability operates on the reverse order of states. Of course, the order of states is given by the product
order (partial order) on multitime source space. The study of controllability of dynamical systems represented by normal PDEs
starts in the papers \cite{12} -- \cite{23}, \cite{3} (multitime maximum principle),
\cite{7}, \cite{8}, \cite{24} (maximum principle in the context of weak derivatives),
\cite{4} (numerical methods for robust control),  \cite{10}, \cite{11} (reachability
of hybrid systems).
Though many of situations are rather well understood, there are still quite challenging open problems due
to the fact that the product order relation on multitime space is not total.

This paper deals with control theory for systems governed by multitime linear PDE systems ($m$-flows).
Section 2 presents a new and complete framework for the multitime nonautonomous linear PDE systems of first order.
Section 3 contains original results about controllability and reachability of the controlled multitime nonautonomous linear PDE systems of first order.
The controllability and the reachability of multitime autonomous linear PDE systems of first order is analyzed in Section 4. The comments (Section 5)
show that in some other situations can occur strange mathematical phenomena due to the discontinuity of controls in multitime evolutions.

\section{Nonautonomous linear PDE system of \\first order}

We start with some mathematical ingredients related to evolution PDEs ($m$-flows). Let $t=(t^1,\ldots,t^m)\in \mathbb{R}^m$, called {\it multitime},
$x=(x^1,\ldots,x^n)^{\top} \in \mathbb{R}^n=\mathcal{M}_{n,1}(\mathbb{R})$, and $G\subseteq \mathbb{R}^m \times \mathbb{R}^n$
be an open subset. We consider the evolution PDE system
\begin{equation}
\label{aa1}
\frac{\partial x}{\partial t^{\alpha}}(t) = X_{\alpha}(t,x(t)), \quad \forall \alpha=\overline{1,m},
\end{equation}
where  $X_{\alpha}:G \to \mathbb{R}^n=\mathcal{M}_{n,1}(\mathbb{R})$, $X_{\alpha}=(X^1_{\alpha},\ldots,X^m_{\alpha})$.

\begin{definition}
\label{definitia2.1} {\it The PDE system $(\ref{aa1})$ is called completely integrable if $\forall (t_0,x_0)\in G$,
$\exists D_0\subseteq \mathbb{R}^m$, $D_0$ open with
$t_0\in D_0$ and $\exists x:D_0\to \mathbb{R}^n$, $x$ differentiable, such that $(t,x(t))\in G$, $\forall t\in D_0$, $x$
verifies $(\ref{aa1})$ and $x(t_0)=x_0$.}
\end{definition}

The following Theorems, \ref{teorema2.1} to \ref{teorema2.4}, represent new versions of some
well-known results \cite{5}, \cite{12} -- \cite{23}.

\begin{theorem}
\label{teorema2.1} {\it Suppose the components $X_\alpha$ are of class
$\mathcal{C}^1$, $\forall \alpha=\overline{1,m}$.

i) Any solution of the PDE system} $(\ref{aa1})$ {\it is of class $\mathcal{C}^2$.

ii) If the PDE system} $(\ref{aa1})$ {\it is completely integrable, then
}
\begin{equation}
\label{aa2}
\begin{split}
\frac{\partial X_{\alpha}}{\partial t^{\beta}}(t,x)
+
X^j_{\beta}(t,x)
\frac{\partial X_{\alpha}}{\partial x^{j}}(t,x)
&=
\frac{\partial X_{\beta}}{\partial t^{\alpha}}(t,x)
+
X^j_{\alpha}(t,x)
\frac{\partial X_{\beta}}{\partial x^{j}}(t,x),\\
\forall (t,x)\in G
&\mbox{,\,\,\,}
\forall \alpha,\beta=\overline{1,m}
\end{split}
\end{equation}
{\it or in matrix notations}
\begin{equation}
\label{aa3}
\begin{split}
\frac{\partial X_{\alpha}}{\partial t^{\beta}}
+
\left(
\frac{\partial X_{\alpha}}{\partial x^1}
\,
\cdots
\,
\frac{\partial X_{\alpha}}{\partial x^n}
\right)
X_{\beta}
&=
\frac{\partial X_{\beta}}{\partial t^{\alpha}}(t,x)
+
\left(
\frac{\partial X_{\beta}}{\partial x^1}
\,
\cdots
\,
\frac{\partial X_{\beta}}{\partial x^n}
\right)
X_{\alpha},\\
\forall (t,x)\in G
&\mbox{,\,\,\,}
\forall \alpha,\beta=\overline{1,m}.
\end{split}
\end{equation}
{\it The relations $(\ref{aa2})$ or $(\ref{aa3})$ are called
the complete integrability conditions.}
\end{theorem}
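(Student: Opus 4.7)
The plan is to handle the two parts in order, since part (ii) uses part (i) in an essential way via Schwarz's theorem on equality of mixed partial derivatives.

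For part (i), I would argue by a bootstrap regularity step. A solution $x$ is, by Definition \ref{definitia2.1}, only assumed differentiable; in particular it is continuous. Hence the composition $t\mapsto X_\alpha(t,x(t))$ is continuous because $X_\alpha$ is $\mathcal{C}^1$. The PDE system $(\ref{aa1})$ then asserts that all first-order partial derivatives $\partial x/\partial t^\alpha$ coincide with continuous functions, so $x$ is actually $\mathcal{C}^1$. Now $t\mapsto X_\alpha(t,x(t))$ is a composition of two $\mathcal{C}^1$ maps, hence itself $\mathcal{C}^1$. Applying $(\ref{aa1})$ once more shows each $\partial x/\partial t^\alpha$ is $\mathcal{C}^1$, so $x\in \mathcal{C}^2$. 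The only subtlety is making sure the ``differentiable'' hypothesis is enough at the first step; this is clear because the right-hand sides of $(\ref{aa1})$ are visibly continuous functions of $t$ once $x$ is continuous.

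For part (ii), I would fix an arbitrary point $(t_0,x_0)\in G$ and use complete integrability to produce a solution $x:D_0\to\mathbb{R}^n$ of $(\ref{aa1})$ satisfying $x(t_0)=x_0$ on a neighborhood $D_0$ of $t_0$. By part (i), this $x$ is $\mathcal{C}^2$ on $D_0$, so by Schwarz's theorem
\[
\frac{\partial}{\partial t^\beta}\!\left(\frac{\partial x}{\partial t^\alpha}\right)(t_0)
=\frac{\partial}{\partial t^\alpha}\!\left(\frac{\partial x}{\partial t^\beta}\right)(t_0),
\qquad \forall\,\alpha,\beta=\overline{1,m}.
\]
I would then compute each side by substituting the PDE and applying the chain rule to $t\mapsto X_\alpha(t,x(t))$, obtaining
\[
\frac{\partial X_\alpha}{\partial t^\beta}(t_0,x_0)
+ \frac{\partial X_\alpha}{\partial x^j}(t_0,x_0)\,\frac{\partial x^j}{\partial t^\beta}(t_0)
\]
and the analogous expression with $\alpha,\beta$ swapped. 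Using $(\ref{aa1})$ at $t_0$ to rewrite $\partial x^j/\partial t^\beta(t_0)=X_\beta^{\,j}(t_0,x_0)$ and correspondingly for the other term, and then equating, yields exactly $(\ref{aa2})$ at $(t_0,x_0)$. Since $(t_0,x_0)\in G$ was arbitrary, $(\ref{aa2})$ holds on all of $G$; rewriting the contractions $X_\beta^{\,j}\,\partial X_\alpha/\partial x^j$ as the matrix-vector product in $(\ref{aa3})$ is purely notational.

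The main obstacle is really only conceptual rather than computational: one must be careful that the definition of ``solution'' gives us just enough regularity to start the bootstrap in part (i), so that part (ii) can legitimately invoke Schwarz's theorem on mixed partials. Once $\mathcal{C}^2$ regularity is secured, the integrability identity $(\ref{aa2})$ is an immediate consequence of the chain rule together with $(\ref{aa1})$, and no further argument (such as Frobenius-type machinery, which would be needed for the converse direction) is required here.
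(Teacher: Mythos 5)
Your proof is correct. The paper itself does not prove Theorem \ref{teorema2.1} (it is stated as a version of well-known results, with references), and your argument --- the bootstrap step $x$ differentiable $\Rightarrow$ right-hand sides continuous $\Rightarrow$ $x\in\mathcal{C}^1$ $\Rightarrow$ right-hand sides $\mathcal{C}^1$ $\Rightarrow$ $x\in\mathcal{C}^2$, followed by Schwarz's theorem and the chain rule applied to a local solution through an arbitrary $(t_0,x_0)\in G$ supplied by complete integrability --- is exactly the standard proof those sources rely on, so there is nothing to add.
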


\begin{theorem}[Frobenius]
\label{teorema2.2}{\it Let
$G\subseteq \mathbb{R}^m \times \mathbb{R}^n$
be an open subset and}
$$X_{\alpha}:G \to \mathbb{R}^n=\mathcal{M}_{n,1}(\mathbb{R}),\,\, X_\alpha\, \,of\, class\,\, \mathcal{C}^1, \forall \alpha=\overline{1,m}.$$

$a)$ {\it If the conditions $(\ref{aa2})$ are satisfied}, {\it then
the PDE system} $(\ref{aa1})$ {\it is completely integrable.}

$b)$ {\it Let $D\subseteq \mathbb{R}^m$ be an open and convex subset and
$G=D\times \mathbb{R}^n$. Suppose that the following condition is fulfilled:
$\exists R\geq0$ {\it and there exist the continuous functions}\,\,$\varphi,\psi: D\to [0,\infty)$ {\it such that}
\begin{equation}
\label{aa4}
||X_{\alpha}(t,x)\| \leq \varphi(t)\|x\|+\psi(t), \,\, \forall t\in D, \,\, \forall x\in \mathbb{R}^n,\,\,\|x\|\geq R, \,\,\forall \alpha=\overline{1,m}.
\end{equation}  }
{\it (For example, if the PDE} $(\ref{aa1})$ {\it is linear, then
the conditions} $(\ref{aa4})$ {\it are satisfied). }

{\it If the complete integrability conditions} $(\ref{aa2})$ {\it are satisfied, then:
$\forall (t_0,x_0)\in D \times \mathbb{R}^n$,
$\exists x: D\to \mathbb{R}^n$, $x$ of class $\mathcal{C}^2$,
solution of the PDE system} $(\ref{aa1})$ {\it and $x(t_0)=x_0$.  }
\end{theorem}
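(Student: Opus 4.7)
The plan is to prove part $a)$ by a local construction along straight-line multitime paths, and to upgrade to $b)$ by combining convexity of $D$ with a Gronwall a priori bound coming from (\ref{aa4}); the same parameterized ODE actually does double duty for both parts.

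For $a)$, fix $(t_0,x_0)\in G$ and a small neighborhood $V$ of $t_0$ so that, for every $t\in V$, the entire segment $\gamma(s,t):=t_0+s(t-t_0)$, $s\in[0,1]$, together with a tube around $x_0$ stays in $G$. Parameter-dependent Picard--Lindel\"of applied to the ODE in $s$,
\[
\frac{dy}{ds}(s)=\sum_{\alpha=1}^{m}(t^{\alpha}-t_0^{\alpha})\,X_{\alpha}\bigl(\gamma(s,t),y(s)\bigr),\qquad y(0)=x_0,
\]
yields a unique $\mathcal{C}^1$ solution $y(s;t)$ on $[0,1]$ for $t\in V$; set $x(t):=y(1;t)$. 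To check $\partial x/\partial t^{\beta}(t)=X_{\beta}(t,x(t))$, introduce $z_{\beta}:=\partial y/\partial t^{\beta}$ and $w_{\beta}(s,t):=z_{\beta}(s,t)-s\,X_{\beta}(\gamma(s,t),y(s,t))$, so that $w_{\beta}(0,t)=0$. Differentiating $w_{\beta}$ in $s$, using $\partial y/\partial s=\sum_{\alpha}(t^{\alpha}-t_0^{\alpha})X_{\alpha}$ and then invoking (\ref{aa2}) to replace $\partial X_{\alpha}/\partial t^{\beta}-\partial X_{\beta}/\partial t^{\alpha}$ by the corresponding Jacobian-in-$x$ expression, all inhomogeneous terms cancel pairwise and $w_{\beta}$ is seen to satisfy the homogeneous linear equation
\[
\frac{\partial w_{\beta}}{\partial s}=\sum_{\alpha=1}^{m}(t^{\alpha}-t_0^{\alpha})\sum_{j=1}^{n}\frac{\partial X_{\alpha}}{\partial x^{j}}(\gamma,y)\,w_{\beta}^{j}.
\]
Uniqueness for linear ODEs forces $w_{\beta}\equiv 0$, and setting $s=1$ gives $\partial x/\partial t^{\beta}(t)=X_{\beta}(t,x(t))$; the $\mathcal{C}^2$ regularity is then automatic from Theorem \ref{teorema2.1}(i).

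For $b)$, take $G=D\times\mathbb{R}^n$ with $D$ open and convex, assume (\ref{aa4}), and fix $(t_0,x_0)\in D\times\mathbb{R}^n$. The same construction now works for every $t_1\in D$: convexity keeps the full segment $\gamma([0,1])$ inside $D$, and since $\varphi,\psi$ are continuous (hence bounded) on the compact set $\gamma([0,1])$, (\ref{aa4}) gives an estimate of the form $\|dy/ds\|\leq A\|y\|+B$ on $\{\|y\|\geq R\}$ (the complementary case being trivial). A Gronwall-type bound then rules out blow-up of $y(\cdot;t_1)$ on $[0,1]$, so $x(t_1):=y(1;t_1)$ is defined globally on $D$; since the verification carried out in $a)$ never used smallness of $t-t_0$, it continues to yield $\partial x/\partial t^{\beta}=X_{\beta}(t,x(t))$ everywhere on $D$, and $x\in\mathcal{C}^2(D)$ by Theorem \ref{teorema2.1}(i).

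The main obstacle is the cancellation in the verification of part $a)$: only the precise symmetric form of (\ref{aa2}) reduces $\partial_s w_{\beta}$ to a homogeneous linear expression in $w_{\beta}$, and any weaker hypothesis would leave an uncontrollable source term. Everything else --- local existence with $\mathcal{C}^1$ parameter dependence and the Gronwall continuation under convexity of $D$ --- is standard, so the whole theorem ultimately rests on that algebraic identity.
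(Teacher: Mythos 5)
The paper itself gives no proof of Theorem \ref{teorema2.2}: it is listed among Theorems \ref{teorema2.1}--\ref{teorema2.4} as ``new versions of some well-known results'' and is deferred to the cited literature, so there is nothing internal to compare against; your argument has to stand on its own, and it does. What you wrote is the classical segment (ray) construction: solve the ODE in $s$ along $\gamma(s,t)=t_0+s(t-t_0)$, set $x(t)=y(1;t)$, and verify the PDE by showing $w_{\beta}=\partial y/\partial t^{\beta}-s\,X_{\beta}(\gamma,y)$ solves a homogeneous linear ODE with $w_{\beta}(0)=0$; I checked the cancellation and condition (\ref{aa2}) is exactly what replaces $\frac{\partial X_{\alpha}}{\partial t^{\beta}}-\frac{\partial X_{\beta}}{\partial t^{\alpha}}-X^{j}_{\alpha}\frac{\partial X_{\beta}}{\partial x^{j}}$ by $-X^{j}_{\beta}\frac{\partial X_{\alpha}}{\partial x^{j}}$, closing the system in $w_{\beta}$. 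The global statement $b)$ via convexity plus the linear-growth bound (\ref{aa4}) and Gronwall is also correct, and $\mathcal{C}^2$ regularity from Theorem \ref{teorema2.1}(i) is legitimate. Two points deserve to be made explicit rather than asserted: in $a)$, the existence of $y(\cdot;t)$ on all of $[0,1]$ for $t\in V$ is not a bare consequence of Picard--Lindel\"of; you need the observation that the right-hand side carries the factor $(t^{\alpha}-t_0^{\alpha})$, so after bounding $X_{\alpha}$ on a compact tube you can shrink $V$ so the trajectory cannot leave the tube before $s=1$. Second, the manipulation of $w_{\beta}$ presupposes that $y$ is $\mathcal{C}^1$ jointly in $(s,t)$ and that $z_{\beta}=\partial y/\partial t^{\beta}$ satisfies the variational equation; this is the standard theorem on differentiability with respect to parameters (available since $X_{\alpha}\in\mathcal{C}^1$), and it should be invoked by name since the whole cancellation argument rests on it. With those two details filled in, the proof is complete.
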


\begin{theorem}
\label{teorema2.3}{\it Let
$G\subseteq \mathbb{R}^m \times \mathbb{R}^n$ be an open subset and
$$X_{\alpha}:G \to \mathbb{R}^n=\mathcal{M}_{n,1}(\mathbb{R}), X_\alpha\,\,\hbox{of class}\,\, \mathcal{C}^1, \forall \alpha=\overline{1,m}.$$}
{\it Let $D_1,D_2\subseteq \mathbb{R}^m$ be open subsets and $y:D_1\to \mathbb{R}^n$,
$z:D_2\to \mathbb{R}^n$ be solutions of the PDE system $(\ref{aa1})$. If $D_1\cap D_2$ is connected and there exists
$t_0\in D_1\cap D_2$ such that $y(t_0)=z(t_0)$, then} $y(t)=z(t), \, \forall t\in D_1\cap D_2$.
\end{theorem}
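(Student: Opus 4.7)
My plan is to use the standard clopen argument adapted to multitime. Define
$$A=\{t\in D_1\cap D_2 \mid y(t)=z(t)\},$$
and show that $A$ is non-empty, closed in $D_1\cap D_2$, and open in $D_1\cap D_2$; connectedness of $D_1\cap D_2$ will then force $A=D_1\cap D_2$. Non-emptiness is the hypothesis $t_0\in A$, and closedness is immediate from the continuity of $y$ and $z$ (both solutions are $\mathcal{C}^2$ by Theorem \ref{teorema2.1}, so certainly continuous).

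The only real content is the openness of $A$. Fix $t_1\in A$ and choose an open box $U=\prod_{\alpha=1}^m(t_1^\alpha-\varepsilon,t_1^\alpha+\varepsilon)\subset D_1\cap D_2$ small enough so that the image of $U$ under $y$ and under $z$ stays inside a common compact set on which $G$ is entered; this is possible by continuity. For any $t\in U$ I will connect $t_1$ to $t$ by the polygonal path whose successive vertices are
$$p_0=t_1,\quad p_\alpha=(t^1,\ldots,t^\alpha,t_1^{\alpha+1},\ldots,t_1^m), \qquad \alpha=1,\ldots,m,$$
so that $p_m=t$ and the segment from $p_{\alpha-1}$ to $p_\alpha$ lies along the $t^\alpha$-axis direction and inside $U$.

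The key observation is that on each such segment, both $y$ and $z$ restrict to solutions of the single-variable ODE
$$\frac{d\xi}{ds}=X_\alpha\bigl(p_{\alpha-1}+s\,e_\alpha,\xi(s)\bigr)$$
(where $e_\alpha$ denotes the $\alpha$-th standard basis vector), because they satisfy \eqref{aa1}. Since $X_\alpha$ is $\mathcal{C}^1$, it is locally Lipschitz in $x$, so the classical uniqueness theorem for ODEs applies: if the two solutions coincide at $s=0$, they coincide on the entire segment. I will argue by induction on $\alpha=0,1,\ldots,m$ that $y(p_\alpha)=z(p_\alpha)$; the base case $\alpha=0$ holds because $t_1\in A$, and the inductive step is exactly the ODE uniqueness on the $\alpha$-th segment. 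Taking $\alpha=m$ gives $y(t)=z(t)$, so $U\subset A$, proving openness.

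The main obstacle—really the only one—is making the polygonal-path construction rigorous: one must ensure the segments remain in $U$ (hence in $D_1\cap D_2$) so that $y$ and $z$ are defined along them, and one must verify that the restriction of a $\mathcal{C}^2$ solution of the PDE system \eqref{aa1} to a coordinate segment is indeed a $\mathcal{C}^1$ solution of the corresponding ODE, which follows simply by inspecting the $\alpha$-th equation of \eqref{aa1}. Everything else is routine.
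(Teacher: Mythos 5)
Your proof is correct. The paper itself gives no argument for Theorem \ref{teorema2.3} (it is listed among the ``new versions of well-known results'' and left unproved), and your clopen argument---nonempty, closed by continuity, open via coordinate-wise polygonal paths inside a small box, with classical Picard--Lindel\'{e}f uniqueness for the single-variable ODE $\frac{d\xi}{ds}=X_\alpha(p_{\alpha-1}+s\,e_\alpha,\xi)$ on each segment and induction over the vertices---is exactly the standard reasoning such a statement rests on. The only blemish is the phrase about the images staying ``inside a common compact set on which $G$ is entered,'' which is unnecessary: all you need is $U\subset D_1\cap D_2$ (available since the intersection is open) together with the fact that $(t,y(t)),(t,z(t))\in G$ by the definition of a solution, so that local Lipschitz continuity of $X_\alpha$ in $x$ applies along each segment.
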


\begin{definition}
\label{definitia2.2} {\it Let
$D\subseteq \mathbb{R}^m $ be an open subset and
$P_{\alpha}:D\to \mathcal{M}_{n,k}(\mathbb{R})$ be functions of class
$\mathcal{C}^1$. We say that the curvilinear integral
$\displaystyle
\int\limits_{\gamma}
P_{\alpha}(t)
\mbox{d}t^{\alpha}
$
is path independent (on $D$), if for any two points $t_0,t_1\in D$ and any two piecewise $\mathcal{C}^1$ curves
$\eta: [a,b]\to D$, $\lambda : [c,d] \to D$, with $\eta(a)=\lambda(c)=t_0$ and
$\eta(b)=\lambda(d)=t_1$, we have}
$$
\int\limits_{\eta}
P_{\alpha}(t)
\mbox{d}t^{\alpha}
=
\int\limits_{\lambda}
P_{\alpha}(t)
\mbox{d}t^{\alpha}.
$$
\end{definition}

\begin{theorem}
\label{teorema2.4}{\it Let $D\subseteq \mathbb{R}^m $ be an open
subset and $P_{\alpha}:D\to \mathcal{M}_{n,k}(\mathbb{R})$ be
$\mathcal{C}^1$ functions, $\forall \alpha=\overline{1,m}$. If $D$
is a convex set (sufficiently, connected and simply connected), then
the following statements are equivalent:

\vspace{0.2cm}
i) $\displaystyle
\frac{\partial P_{\alpha}}{\partial t^{\beta}}(t)
=
\frac{\partial P_{\beta}}{\partial t^{\alpha}}(t),
\quad
\forall t\in D,
\quad
\forall \alpha=\overline{1,m}.$

\vspace{0.2cm}
ii) $\exists \xi :D\to \mathcal{M}_{n,k}(\mathbb{R})$
solution of the PDE system
$$
\frac{\partial \xi}{\partial t^{\alpha}}(t)
=P_{\alpha}(t),
\quad
\forall t\in D,
\quad
\forall \alpha=\overline{1,m}.
$$

iii) The curvilinear integral
$$
\int\limits_{\gamma}
P_{\alpha}(t)
\mbox{d}t^{\alpha}
$$
is path independent on the set $D$.}
\end{theorem}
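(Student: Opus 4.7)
The plan is to prove the implications $(ii)\Rightarrow(i)$, $(ii)\Rightarrow(iii)$, $(iii)\Rightarrow(ii)$, and $(i)\Rightarrow(ii)$, which together yield the threefold equivalence. Since $P_\alpha$ and the candidate potential $\xi$ take values in $\mathcal{M}_{n,k}(\mathbb{R})$, every identity can be checked entry by entry, so the argument reduces formally to the scalar Poincar\'e lemma and the scalar fundamental theorem of line integrals; the matrix-valued extensions are merely componentwise.

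The implication $(ii)\Rightarrow(i)$ is the immediate one: from $\partial\xi/\partial t^\alpha = P_\alpha$ and $P_\alpha\in\mathcal{C}^1$ I obtain $\xi\in\mathcal{C}^2$, and Schwarz's theorem on mixed partials gives $\partial P_\alpha/\partial t^\beta = \partial^2\xi/(\partial t^\beta\partial t^\alpha) = \partial^2\xi/(\partial t^\alpha\partial t^\beta) = \partial P_\beta/\partial t^\alpha$. For $(ii)\Rightarrow(iii)$, along any piecewise $\mathcal{C}^1$ curve $\eta:[a,b]\to D$ from $t_0$ to $t_1$ the chain rule yields
$$\int_\eta P_\alpha(t)\,dt^\alpha = \int_a^b P_\alpha(\eta(s))\,\dot\eta^\alpha(s)\,ds = \int_a^b \frac{d}{ds}\xi(\eta(s))\,ds = \xi(t_1) - \xi(t_0),$$
so the integral depends only on the endpoints.

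For $(iii)\Rightarrow(ii)$ I fix an arbitrary base point $t_*\in D$ (the open connected set $D$ is path-connected) and define $\xi(t) := \int_{\gamma_{t_*\to t}} P_\alpha(u)\,du^\alpha$; path independence makes this unambiguous. To compute $\partial\xi/\partial t^\beta$ at a point $t\in D$, I choose $h>0$ small enough that the segment from $t$ to $t+h\mathbf{e}_\beta$ (where $\mathbf{e}_\beta$ is the $\beta$-th canonical basis vector of $\mathbb{R}^m$) lies in $D$, and concatenate the defining path for $\xi(t)$ with this segment. Path independence then gives $\xi(t+h\mathbf{e}_\beta) - \xi(t) = \int_0^h P_\beta(t+s\mathbf{e}_\beta)\,ds$; dividing by $h$ and letting $h\to 0$ yields $P_\beta(t)$ by continuity of $P_\beta$.

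The only substantive step is $(i)\Rightarrow(ii)$, which is the Poincar\'e lemma on the convex set $D$. Fixing $t_0\in D$, convexity guarantees that the segment $t_0+s(t-t_0)$ lies in $D$ for every $s\in[0,1]$, so I can define
$$\xi(t) := \int_0^1 P_\alpha\bigl(t_0+s(t-t_0)\bigr)\,(t^\alpha - t_0^\alpha)\,ds.$$
Differentiation under the integral (justified by $\mathcal{C}^1$ regularity and compactness of $[0,1]$) produces
$$\frac{\partial\xi}{\partial t^\beta}(t) = \int_0^1\!\Bigl[P_\beta(t_0+s(t-t_0)) + s\,\frac{\partial P_\alpha}{\partial t^\beta}(t_0+s(t-t_0))\,(t^\alpha - t_0^\alpha)\Bigr]ds.$$
The conceptual crux, and the one place where hypothesis $(i)$ enters, is the rearrangement that comes next: swapping $\partial P_\alpha/\partial t^\beta$ for $\partial P_\beta/\partial t^\alpha$ turns the integrand into $\frac{d}{ds}\bigl[s\,P_\beta(t_0+s(t-t_0))\bigr]$, whence the integral telescopes to $P_\beta(t)$. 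I expect this identification of the bracket as a total $s$-derivative to be the main (and really the only) nonroutine step; everything else is standard differential and integral calculus.
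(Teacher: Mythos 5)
Your proposal is correct, and it is essentially the argument the paper intends: the paper states Theorem \ref{teorema2.4} without proof (citing it as a known result), but the two constructions you use are exactly the ones recorded in the paper's remarks $a)$ and $b)$ following the theorem, namely the fundamental theorem for curvilinear integrals giving $\int_{\gamma}P_{\alpha}\,\mbox{d}t^{\alpha}=\xi(\gamma(b))-\xi(\gamma(a))$ and the primitive $\xi(t)=\int_0^1 (t^{\alpha}-t_0^{\alpha})P_{\alpha}((1-\tau)t_0+\tau t)\,\mbox{d}\tau$ on the convex (star-shaped) set, whose $t^{\beta}$-derivative telescopes to $P_{\beta}(t)$ precisely via hypothesis $i)$. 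The only cosmetic point is that in $(iii)\Rightarrow(ii)$ the difference quotient should be taken with $h\to 0$ of either sign (immediate since $D$ is open), and your cycle of implications does close the threefold equivalence.
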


In the conditions $i)$ - $iii)$, we have:

$a)$ If $\xi$ is a solution of the PDE system of $ii)$, and $\gamma: [a,b]\to D$ is a piecewise $\mathcal{C}^1$ curve, then
$$
\int\limits_{\gamma}
P_{\alpha}(t)
\mbox{d}t^{\alpha}
=
\xi(\gamma(b))- \xi(\gamma(a)).
$$

$b)$ Let $t_0\in D$ be a fixed point. For $t\in D$, let
$\gamma_{t_0,t}: [a,b]\to D$ be a piecewise $\mathcal{C}^1$ curve, from
$t_0$ to $t$. The primitive
$$
\xi:D\to \mathcal{M}_{n,k}(\mathbb{R}),\,\,\xi(t)= \int\limits_{\gamma_{t_0,t}}P_{\alpha}(s)\mbox{d}s^{\alpha}
$$
is a solution of the PDE system of $ii)$.

For example, if $D$ is a star-shaped set with respect to $t_0$, then the primitive $\xi$ can be written alternatively
$$
\xi(t)
=
\int\limits_{0}^{1}
(t^{\alpha}-t_0^{\alpha})P_{\alpha}((1-\tau)t_0+\tau t)
\, \mbox{d}\tau.
$$

In case that the PDE system (\ref{aa1}) is linear, i.e.,
\begin{equation}
\label{aa5}
\frac{\partial x}{\partial t^{\alpha}}
=
M_{\alpha}(t)x+F_{\alpha}(t),
\quad
\forall \alpha=\overline{1,m},
\end{equation}
with $M_{\alpha}:D\to \mathcal{M}_n(\mathbb{R})$ and
$F_{\alpha}:D\to \mathbb{R}^n = \mathcal{M}_{n,1}(\mathbb{R})$
of class $\mathcal{C}^1$, the complete integrability conditions become
$$
\frac{\partial M_{\alpha}}{\partial t^{\beta}}(t)x
+
\frac{\partial F_{\alpha}}{\partial t^{\beta}}(t)
+
M_{\alpha}(t)
(
M_{\beta}(t)x+F_{\beta}(t)
)
$$
$$
= \frac{\partial M_{\beta}}{\partial t^{\alpha}}(t)x
+
\frac{\partial F_{\beta}}{\partial t^{\alpha}}(t)
+
M_{\beta}(t)
(
M_{\alpha}(t)x+F_{\alpha}(t)),
$$
$$\forall t\in D, \,\, \forall x\in \mathbb{R}^n, \,\, \forall \alpha,\beta=\overline{1,m},$$
which is equivalent to
\begin{equation}
\label{aa6}
\begin{split}
\frac{\partial M_{\alpha}}{\partial t^{\beta}}(t)
+
M_{\alpha}(t)M_{\beta}(t)
=
\frac{\partial M_{\beta}}{\partial t^{\alpha}}(t)
+
M_{\beta}(t)M_{\alpha}(t)
\end{split}
\end{equation}
\begin{equation}
\label{aa7}
M_{\alpha}(t)F_{\beta}(t)
+
\frac{\partial F_{\alpha}}{\partial t^{\beta}}(t)
=
M_{\beta}(t)F_{\alpha}(t)
+
\frac{\partial F_{\beta}}{\partial t^{\alpha}}(t).
\end{equation}

We have obtained the following result:

\begin{theorem}
\label{teorema2.5} {\it Let $D\subseteq \mathbb{R}^m$ be an open and convex subset, let
$M_{\alpha}:D \to \mathcal{M}_{n}(\mathbb{R})$ be $\mathcal{C}^1$ matrix functions,
$\forall \alpha=\overline{1,m}$ and let
$F_{\alpha}:D \to\mathbb{R}^n=\mathcal{M}_{n,1}(\mathbb{R})$ be $\mathcal{C}^1$ vector functions, $\forall \alpha=\overline{1,m}$.
Suppose the relations} $(\ref{aa6})$, $(\ref{aa7})$ {\it are true}, $\forall t\in D, \,\,
\forall \alpha,\beta=\overline{1,m}$. {\it Then the problem}
\begin{equation*}
\label{aa8}
\frac{\partial x}{\partial t^{\alpha}}
=
M_{\alpha}(t)x+F_{\alpha}(t), \quad \forall \alpha=\overline{1,m},
\end{equation*}
\begin{equation*}
\label{aa9}
x(t_0)=x_0
\end{equation*}
{\it has a unique solution $x:D\to \mathbb{R}^n$. This solution is of class $\mathcal{C}^2$. }
\end{theorem}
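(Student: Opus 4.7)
\textbf{Proof plan for Theorem \ref{teorema2.5}.} The plan is to package this statement as a direct consequence of Theorems \ref{teorema2.1}, \ref{teorema2.2}(b) and \ref{teorema2.3}, once the data $X_\alpha(t,x):=M_\alpha(t)x+F_\alpha(t)$ are checked to meet the hypotheses of the general Frobenius framework developed in Section~2.

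First I would set $X_\alpha:D\times\mathbb{R}^n\to\mathbb{R}^n$, $X_\alpha(t,x)=M_\alpha(t)x+F_\alpha(t)$. Since $M_\alpha,F_\alpha$ are of class $\mathcal{C}^1$ on $D$ and $X_\alpha$ depends linearly on $x$, each $X_\alpha$ is of class $\mathcal{C}^1$ on the open set $G:=D\times\mathbb{R}^n$. The computation carried out in the paragraphs preceding the statement shows that the complete integrability conditions (\ref{aa2}) for these particular $X_\alpha$ reduce, after separating the $x$-dependent and $x$-independent parts, exactly to the pair (\ref{aa6}), (\ref{aa7}); thus the hypothesis of Theorem \ref{teorema2.2} on (\ref{aa2}) is satisfied by assumption.

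Next I would verify the linear growth bound (\ref{aa4}). Put
$$\varphi(t):=\max_{1\le\alpha\le m}\|M_\alpha(t)\|,\qquad \psi(t):=\max_{1\le\alpha\le m}\|F_\alpha(t)\|.$$
Both functions are continuous on $D$ as maxima of finitely many continuous functions, and for every $t\in D$, $x\in\mathbb{R}^n$ and every $\alpha$ one has $\|X_\alpha(t,x)\|\le\|M_\alpha(t)\|\,\|x\|+\|F_\alpha(t)\|\le\varphi(t)\|x\|+\psi(t)$, so (\ref{aa4}) holds (with $R=0$). Since $D$ is open and convex and $G=D\times\mathbb{R}^n$, Theorem \ref{teorema2.2}(b) applies: for every $(t_0,x_0)\in D\times\mathbb{R}^n$ there exists a solution $x:D\to\mathbb{R}^n$ of the PDE system which is of class $\mathcal{C}^2$ and satisfies $x(t_0)=x_0$. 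This already yields existence and the $\mathcal{C}^2$ regularity (the latter being also a direct application of Theorem \ref{teorema2.1}(i)).

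For uniqueness I would invoke Theorem \ref{teorema2.3}: if $x_1,x_2:D\to\mathbb{R}^n$ are two solutions of the Cauchy problem, then $D_1=D_2=D$, the intersection $D_1\cap D_2=D$ is connected (being convex), and $x_1(t_0)=x_2(t_0)=x_0$; hence $x_1\equiv x_2$ on $D$. Honestly, there is no real obstacle here — all the heavy lifting (the global Frobenius existence with a linear growth bound, the local uniqueness/identity principle on connected domains, and the automatic $\mathcal{C}^2$ regularity of $\mathcal{C}^1$-data solutions) has already been done in Theorems \ref{teorema2.1}--\ref{teorema2.3}. The only point where one has to be a bit careful is producing a single pair $(\varphi,\psi)$ that dominates all $X_\alpha$ simultaneously, which is what the $\max$ construction above is for.
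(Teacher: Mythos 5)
Your proposal is correct and matches the paper's own (largely implicit) argument: the paper introduces Theorem \ref{teorema2.5} with ``We have obtained the following result,'' relying on the preceding computation that reduces (\ref{aa2}) to (\ref{aa6})--(\ref{aa7}) for linear data, on the parenthetical remark in Theorem \ref{teorema2.2}(b) that linear systems satisfy (\ref{aa4}), and on Theorem \ref{teorema2.3} for uniqueness over the convex (hence connected) set $D$. Your explicit construction of $\varphi,\psi$ via maxima just fills in a detail the paper leaves to the reader.
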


Further, everywhere, $D$ will be an open and convex subset of $\mathbb{R}^m$, and
$M_{\alpha}:D \to \mathcal{M}_{n}(\mathbb{R})$, \,\,$\forall \alpha=\overline{1,m}$, are matrix functions of class $\mathcal{C}^1$,
which verifies the relations (\ref{aa6}), $\forall t\in D, \,\, \forall \alpha,\beta=\overline{1,m}$.

There exists a unique matrix solution
$$\chi(\,\cdot\, , t_0):D\to \mathcal{M}_n( \mathbb{R})$$
of the problem
\begin{equation}
\label{aa10}
\frac{\partial X}{\partial t^{\alpha}}
=
M_{\alpha}(t)X,
\quad
\forall \alpha=\overline{1,m}
\end{equation}
\begin{equation*}
X(t_0)=I_n.
\end{equation*}
(For those $n$ problems equivalent to
the matrix problem, we apply the Theorem \ref{teorema2.5}).

\begin{definition}
\label{definitia2.3} {\it The matrix function}
$$\chi(\,\cdot\, , \cdot\,):D\times D \to \mathcal{M}_n( \mathbb{R})$$
{\it is called the fundamental matrix.}
\end{definition}

\begin{proposition}
\label{propozitia2.1} {\it The fundamental matrix has the following properties:

$a)$ $\chi(t,t_0)\chi(t_0,t_1)=\chi(t,t_1)$,\,\,
$\forall t_0, t_1, t\in D$,

$b)$ $\chi(t_0,t_0)=I_n$, \,
$\forall t_0\in D$,

$c)$ $\chi(t,t_0)^{-1}=\chi(t_0,t)$, \,
$\forall t_0, t\in D$.

$d)$ $\displaystyle
\frac{\partial}{\partial t^{\alpha}}
(\chi(t_0,t))=
-\chi(t_0,t)M_{\alpha}(t),
\quad
\forall t\in D,
\quad
\forall \alpha
$. }
\end{proposition}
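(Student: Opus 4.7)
The plan is to prove items (b), (a), (c), (d) in that order, the engine throughout being the uniqueness part of Theorem~\ref{teorema2.5} applied columnwise to matrix Cauchy problems of the form $\partial X/\partial t^{\alpha}=M_{\alpha}(t)X$. Item (b) is immediate from the construction of $\chi(\,\cdot\,,t_0)$: the initial condition $X(t_0)=I_n$ appearing in (\ref{aa10}) is precisely $\chi(t_0,t_0)=I_n$.

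For item (a), I fix $t_0,t_1\in D$ and introduce the two matrix functions $Y,Z\colon D\to\mathcal{M}_n(\mathbb{R})$ given by $Y(t):=\chi(t,t_0)\,\chi(t_0,t_1)$ and $Z(t):=\chi(t,t_1)$. Right multiplication by the constant matrix $\chi(t_0,t_1)$ commutes with each $\partial/\partial t^{\alpha}$ and preserves the equation, so both $Y$ and $Z$ solve the matrix system (\ref{aa10}) on $D$; evaluating at $t=t_0$ gives $Y(t_0)=I_n\,\chi(t_0,t_1)=\chi(t_0,t_1)=Z(t_0)$. Uniqueness then forces $Y\equiv Z$ on $D$, which is the claimed identity.

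Item (c) follows by specialising (a) to $t_1=t$ and invoking (b): we obtain $\chi(t,t_0)\,\chi(t_0,t)=\chi(t,t)=I_n$, so $\chi(t,t_0)$ is invertible with inverse $\chi(t_0,t)$. For item (d), I differentiate this last identity with respect to $t^{\alpha}$, remembering that $t$ enters both factors; the product rule combined with (\ref{aa10}) for $\chi(\,\cdot\,,t_0)$ yields $M_{\alpha}(t)\,\chi(t,t_0)\,\chi(t_0,t)+\chi(t,t_0)\,\partial\chi(t_0,t)/\partial t^{\alpha}=0$. Substituting (c) into the first summand reduces this to $M_{\alpha}(t)+\chi(t,t_0)\,\partial\chi(t_0,t)/\partial t^{\alpha}=0$, and left-multiplying by $\chi(t_0,t)=\chi(t,t_0)^{-1}$ produces the stated formula.

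The only real subtlety is the appeal to Theorem~\ref{teorema2.5} in part (a), since that theorem is formulated for column-vector $x\in\mathbb{R}^n$, whereas $Y$ and $Z$ are matrix-valued. One must note --- exactly as the excerpt does just before Definition~\ref{definitia2.3} when constructing $\chi(\,\cdot\,,t_0)$ itself --- that the matrix equation (\ref{aa10}) splits into $n$ independent vector problems, one per column, each of which inherits the complete integrability conditions (\ref{aa6}) directly from the $M_{\alpha}$. With that reduction recorded, Theorem~\ref{teorema2.5} applies to each column pair of $Y$ and $Z$, and the rest of the argument is purely formal, requiring no further analytic input.
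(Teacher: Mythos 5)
Your proposal is correct and follows essentially the same route as the paper: part (a) via uniqueness for the matrix Cauchy problem applied to $Y(t)=\chi(t,t_0)\chi(t_0,t_1)$, part (c) by specialising (a) with $t_1=t$, and part (d) by differentiating $\chi(t,t_0)\chi(t_0,t)=I_n$. The remark about reducing the matrix system to $n$ columnwise vector problems is exactly the justification the paper itself invokes when constructing $\chi$.
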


\begin{proof}
$a)$ If $Y(t)=\chi(t,t_0)\chi(t_0,t_1)$, then
$$
\frac{\partial Y}{\partial t^{\alpha}}=
\frac{\partial }{\partial t^{\alpha}}
(\chi(t,t_0))\chi(t_0,t_1)=
M_{\alpha}(t)\chi(t,t_0)\chi(t_0,t_1)=
M_{\alpha}(t)Y;
$$
$$
Y(t_0)=\chi(t_0,t_0)\chi(t_0,t_1)=
I_n\chi(t_0,t_1)=
\chi(t_0,t_1).
$$
Hence $Y(t)$ and $\chi(t,t_1)$ are both solutions of the matrix PDE system
$$
\frac{\partial X}{\partial t^{\alpha}}
=
M_{\alpha}(t)X,
\quad \forall \alpha=\overline{1,m},
$$
which coincide for $t=t_0$. From uniqueness it follows
that $Y(t)=\chi(t,t_1)$, $\forall t$.\\
$b)$ Direct consequence of the definition of the function $\chi(t,t_0)$.\\
$c)$ It follows readily from a) and b). For a), we take $t_1=t$, etc.\\
$d)$ Differentiating the identity
$$
\chi(t,t_0)\chi(t_0,t)=I_n
$$
with respect to $t^\alpha$, we find
$$
\frac{\partial}{\partial t^{\alpha}}
(\chi(t,t_0))\chi(t_0,t)+
\chi(t,t_0)
\frac{\partial}{\partial t^{\alpha}}
(\chi(t_0,t))=0
$$
or
$$
M_{\alpha}(t)\chi(t,t_0)\chi(t_0,t)+
\chi(t,t_0)
\frac{\partial}{\partial t^{\alpha}}
(\chi(t_0,t))=0,
$$
i.e,
$$
\chi(t,t_0)
\frac{\partial}{\partial t^{\alpha}}
(\chi(t_0,t))=
-M_{\alpha}(t).
$$
Multiplying at the left-hand side by $\chi(t,t_0)^{-1}=\chi(t_0,t)$, we get
$$
\frac{\partial}{\partial t^{\alpha}}
(\chi(t_0,t))=
-\chi(t_0,t)M_{\alpha}(t).
$$
\end{proof}

\begin{proposition}
\label{propozitia2.2} {\it The Cauchy problem}
\begin{equation}
\label{aa12}
\frac{\partial x}{\partial t^{\alpha}}
=
M_{\alpha}(t)x,
\quad
\forall \alpha=\overline{1,m},
\end{equation}
\begin{equation*}
x(t_0)=x_0
\end{equation*}
{\it has the solution \,\,$x:D\to \mathbb{R}^n$, $x(t)=\chi(t,t_0)x_0$.}
\end{proposition}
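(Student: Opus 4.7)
The plan is simply to exhibit the candidate $x(t):=\chi(t,t_0)x_0$ as a solution and then invoke the uniqueness already in hand through Theorem~\ref{teorema2.5}. Since $M_\alpha$ is of class $\mathcal{C}^1$ on $D$ and satisfies the complete integrability conditions (\ref{aa6}) as standing assumptions in this section, and since the choice $F_\alpha\equiv 0$ makes (\ref{aa7}) trivially satisfied, Theorem~\ref{teorema2.5} applies to the homogeneous system (\ref{aa12}) and delivers, for any initial data $(t_0,x_0)$, a unique $\mathcal{C}^2$ solution $x:D\to\mathbb{R}^n$.

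First I would verify that the proposed $x(t):=\chi(t,t_0)x_0$ does satisfy (\ref{aa12}). Differentiating with respect to $t^\alpha$ and using the defining equation (\ref{aa10}) for the fundamental matrix gives
$$\frac{\partial x}{\partial t^\alpha}(t)=\frac{\partial \chi(\cdot,t_0)}{\partial t^\alpha}(t)\,x_0=M_\alpha(t)\,\chi(t,t_0)\,x_0=M_\alpha(t)\,x(t),$$
which is exactly the required PDE for every $\alpha=\overline{1,m}$. The initial condition is immediate from Proposition~\ref{propozitia2.1}~b): $x(t_0)=\chi(t_0,t_0)\,x_0=I_n\,x_0=x_0$.

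That already settles existence; uniqueness of $x$ among $\mathcal{C}^2$ solutions on $D$ is inherited from Theorem~\ref{teorema2.5}, and uniqueness among merely differentiable local solutions on any connected open neighbourhood of $t_0$ would follow, if needed, from Theorem~\ref{teorema2.3}. There is no genuine obstacle in this proof — the whole argument is a two-line computation — which is really the point: Definition~\ref{definitia2.3} of $\chi$ and the calculus rules of Proposition~\ref{propozitia2.1} have been engineered precisely to make the representation formula $x(t)=\chi(t,t_0)x_0$ tautological.
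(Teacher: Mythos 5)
Your proof is correct and is exactly the argument the paper leaves implicit (the paper states Proposition~\ref{propozitia2.2} without proof): differentiate $\chi(t,t_0)x_0$ using the defining system (\ref{aa10}) and check $x(t_0)=x_0$ via $\chi(t_0,t_0)=I_n$. The appeal to Theorem~\ref{teorema2.5} for uniqueness is a harmless bonus beyond what the statement asks.
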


\begin{definition}
\label{definitia2.4} {\it Let us consider
the PDE system} $(\ref{aa12})$.
{\it The homogeneous PDE system}
\begin{equation}
\frac{\partial y}{\partial t^{\alpha}}(t)
=
-M^\top_{\alpha}(t)y(t),
\quad
\forall \alpha=\overline{1,m}
\end{equation}
{\it is called the adjoint system}.
\end{definition}

The complete integrability conditions of the adjoint system are
$$
-\frac{\partial M^\top_{\alpha}}{\partial t^{\beta}}
+
M^\top_{\alpha}M^\top_{\beta}
=
-\frac{\partial M^\top_{\beta}}{\partial t^{\alpha}}
+
M^\top_{\beta}M^\top_{\alpha}
$$
or
$$
-\frac{\partial M_{\alpha}}{\partial t^{\beta}}
+
M_{\beta}M_{\alpha}
=
-\frac{\partial M_{\beta}}{\partial t^{\alpha}}
+
M_{\alpha}M_{\beta}
$$
$$
\frac{\partial M_{\alpha}}{\partial t^{\beta}}
+
M_{\alpha}M_{\beta}
=
\frac{\partial M_{\beta}}{\partial t^{\alpha}}
+
M_{\beta}M_{\alpha},
$$
i.e., identical to the relations (\ref{aa6}) of complete
integrability of the system (\ref{aa12}).

\begin{proposition}
\label{propozitia2.3} {\it $a)$ The matrix solution of the Cauchy problem
\begin{equation*}
\frac{\partial X}{\partial t^{\alpha}}
=
-M^\top_{\alpha}X,
\quad \forall \alpha=\overline{1,m}
\end{equation*}
\begin{equation*}
X(t_0)=I_n
\end{equation*}
is\, $\Phi(t,t_0)=\chi(t_0,t)^\top$.

$b)$ The solution of the adjoint Cauchy problem
\begin{equation*}
\frac{\partial \varphi}{\partial t^{\alpha}}
=
-M^\top_{\alpha}\varphi,
\quad \forall \alpha=\overline{1,m}
\end{equation*}
\begin{equation*}
\varphi(t_0)=\varphi_0
\end{equation*}
is}
$$\varphi(t)=
\Phi(t,t_0)\varphi_0=\chi(t_0,t)^\top\varphi_0.
$$
\end{proposition}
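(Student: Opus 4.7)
The plan is to establish part $a)$ directly by producing a candidate matrix, checking it satisfies the adjoint matrix PDE together with the initial condition, and then invoking the uniqueness part of Theorem \ref{teorema2.5}. Part $b)$ will then follow as a one-line corollary.

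For $a)$, I would set $\Phi(t,t_0):=\chi(t_0,t)^\top$ and verify the two required properties. The initial condition is immediate: by Proposition \ref{propozitia2.1}$(b)$, $\chi(t_0,t_0)=I_n$, so $\Phi(t_0,t_0)=I_n^\top=I_n$. For the PDE, I would apply Proposition \ref{propozitia2.1}$(d)$, which gives
$$\frac{\partial}{\partial t^\alpha}\chi(t_0,t)=-\chi(t_0,t)M_\alpha(t),$$
and take the transpose of both sides (transposition commutes with partial differentiation, and $(AB)^\top=B^\top A^\top$), obtaining
$$\frac{\partial}{\partial t^\alpha}\Phi(t,t_0)=-M_\alpha(t)^\top\,\chi(t_0,t)^\top=-M_\alpha^\top(t)\,\Phi(t,t_0).$$
So $\Phi(t,t_0)$ solves the required adjoint matrix Cauchy problem. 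Uniqueness is guaranteed by Theorem \ref{teorema2.5}, applied column-by-column to the adjoint system; the hypotheses hold because the complete integrability conditions for the adjoint system were shown immediately before the proposition to coincide with \eqref{aa6}, which $M_\alpha$ is assumed to satisfy throughout the section.

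For $b)$, I would set $\varphi(t):=\Phi(t,t_0)\varphi_0=\chi(t_0,t)^\top\varphi_0$ and differentiate, using part $a)$:
$$\frac{\partial \varphi}{\partial t^\alpha}(t)=\left(\frac{\partial \Phi}{\partial t^\alpha}(t,t_0)\right)\varphi_0=-M_\alpha^\top(t)\,\Phi(t,t_0)\varphi_0=-M_\alpha^\top(t)\,\varphi(t),$$
and $\varphi(t_0)=\Phi(t_0,t_0)\varphi_0=\varphi_0$, so $\varphi$ solves the stated Cauchy problem; uniqueness (Theorem \ref{teorema2.5} again) identifies it with the solution.

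The only genuine subtlety is the transpose computation in part $a)$ and the observation that the complete integrability conditions for the adjoint system reduce to \eqref{aa6} so that Theorem \ref{teorema2.5} is applicable; both points are already essentially recorded in the text preceding the proposition, so no real obstacle remains.
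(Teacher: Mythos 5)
Your proposal is correct and follows essentially the same route as the paper: the paper's proof also consists of transposing the identity of Proposition \ref{propozitia2.1}$(d)$ to obtain $\frac{\partial}{\partial t^{\alpha}}(\chi(t_0,t)^\top)=-M_{\alpha}^\top\chi(t_0,t)^\top$ and then deducing $b)$ immediately from $a)$. Your additional remarks on the initial condition, uniqueness via Theorem \ref{teorema2.5}, and the integrability conditions of the adjoint system are implicit in the paper's argument and are harmless elaborations.
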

\begin{proof}
$a)$ We use the Proposition \ref{propozitia2.1}, $d)$, i.e.,
$$
\frac{\partial}{\partial t^{\alpha}}
(\chi(t_0,t))=
-\chi(t_0,t)M_{\alpha},
$$
which is equivalent to
$$
\frac{\partial}{\partial t^{\alpha}}
(\chi(t_0,t)^\top)=
-M_{\alpha}^\top\chi(t_0,t)^\top.
$$
$b)$ follows immediately from $a)$.
\end{proof}

\begin{theorem}
\label{teorema2.6} {\it In the conditions of
Theorem} $\ref{teorema2.5}$, {\it the solution of the Cauchy problem}
\begin{equation}
\label{aa15}
\frac{\partial x}{\partial t^{\alpha}}
=
M_{\alpha}(t)x+F_{\alpha}(t),
\quad
\forall \alpha=\overline{1,m},
\end{equation}
\begin{equation*}
x(t_0)=x_0
\end{equation*}
{\it is}
$$
x:D\to \mathbb{R}^n, \,\,x(t)=\chi(t,t_0)x_0
+
\int\limits_{\gamma_{t_0,t}}^{}
\chi(t,s)F_{\alpha}(s)
\mbox{d}s^{\alpha},
$$
{\it where $\gamma_{t_0,t}$ is a piecewise $\mathcal{C}^1$ curve,
included in $D$, covered from $t_0$ to $t$}.

{\it The curvilinear integral $\displaystyle \int\limits_{\gamma}^{}
\chi(t,s)F_{\alpha}(s)
\mbox{d}s^{\alpha}$ is path independent}.
\end{theorem}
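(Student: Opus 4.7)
The plan is to use a variation-of-parameters factorization to reduce the problem to the primitive statement (Theorem \ref{teorema2.4}). Using the semigroup property $\chi(t,s)=\chi(t,t_0)\chi(t_0,s)$ of Proposition \ref{propozitia2.1} a), I would rewrite the candidate solution as $x(t)=\chi(t,t_0)(x_0+Z(t))$, where
$$
Z(t):=\int\limits_{\gamma_{t_0,t}}\chi(t_0,s)F_\alpha(s)\,\mbox{d}s^\alpha.
$$
This pulls the $t$-dependence out of the kernel and leaves an integrand in which $t$ enters only through the endpoint of the curve, which is exactly the setting of Theorem \ref{teorema2.4}.

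The first step is to establish that $Z$ is well defined, i.e., that the above curvilinear integral is path independent. By Theorem \ref{teorema2.4} I need only verify the closedness condition $\partial_{s^\beta}\bigl[\chi(t_0,s)F_\alpha(s)\bigr]=\partial_{s^\alpha}\bigl[\chi(t_0,s)F_\beta(s)\bigr]$. Using Proposition \ref{propozitia2.1} d) to compute $\partial_{s^\beta}\chi(t_0,s)=-\chi(t_0,s)M_\beta(s)$, this reduces after cancellation to
$$
\chi(t_0,s)\bigl[-M_\beta(s)F_\alpha(s)+\partial_{s^\beta}F_\alpha(s)\bigr]=\chi(t_0,s)\bigl[-M_\alpha(s)F_\beta(s)+\partial_{s^\alpha}F_\beta(s)\bigr],
$$
which is precisely hypothesis (\ref{aa7}). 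Theorem \ref{teorema2.4} then yields $\partial Z/\partial t^\alpha=\chi(t_0,t)F_\alpha(t)$ and $Z(t_0)=0$. The second step is a direct verification: using (\ref{aa10}) and Proposition \ref{propozitia2.1} c), one computes
$$
\frac{\partial x}{\partial t^\alpha}=M_\alpha(t)\chi(t,t_0)(x_0+Z(t))+\chi(t,t_0)\chi(t_0,t)F_\alpha(t)=M_\alpha(t)x(t)+F_\alpha(t),
$$
while $x(t_0)=\chi(t_0,t_0)x_0=x_0$. Uniqueness is inherited from Theorem \ref{teorema2.5}.

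The only subtle point, and what I expect to be the main obstacle in a direct attack, is that in the original formula $t$ appears simultaneously in the kernel $\chi(t,s)$ and in the endpoint of the integration curve, so differentiating under the integral sign is delicate. The factorization $\chi(t,s)=\chi(t,t_0)\chi(t_0,s)$ decouples these two roles and sidesteps the difficulty entirely. Finally, path independence of the original integral $\int_\gamma\chi(t,s)F_\alpha(s)\,\mbox{d}s^\alpha$ for each fixed $t$ is then immediate: either factor $\chi(t,t_0)$ outside and invoke the path independence of $Z$, or equivalently repeat the closedness verification above with $t$ in place of $t_0$, which again uses only (\ref{aa7}) and Proposition \ref{propozitia2.1} d).
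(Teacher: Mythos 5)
Your proposal is correct and follows essentially the same route as the paper: both factor $\chi(t,s)=\chi(t,t_0)\chi(t_0,s)$ to decouple the $t$-dependence, verify path independence by reducing the closedness condition to (\ref{aa7}) via Proposition \ref{propozitia2.1} d), and then differentiate the primitive using Theorem \ref{teorema2.4} to check the PDE and the initial condition directly.
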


\begin{proof} We show that the curvilinear integral is path
independent. According to the Theorem \ref{teorema2.4}, we must show that
$$
\frac{\partial }{\partial s^{\beta}}
\left(
\chi(t,s)F_{\alpha}(s)
\right)
=
\frac{\partial }{\partial s^{\alpha}}
\left(
\chi(t,s)F_{\beta}(s)
\right)
$$
or
$$
-\chi(t,s)M_{\beta}(s)F_{\alpha}(s)
+
\chi(t,s)
\frac{\partial F_{\alpha}}{\partial s^{\beta}}(s)
=
-\chi(t,s)M_{\alpha}(s)F_{\beta}(s)
+
\chi(t,s)
\frac{\partial F_{\beta}}{\partial s^{\alpha}}(s),
$$
and these are equivalent to the relations (\ref{aa7}).

Now we write the sheet $x(t)$ as
$$
x(t)=\chi(t,t_0)x_0
+
\chi(t,t_0)
\int\limits_{\gamma_{t_0,t}}^{}
\chi(t_0,s)F_{\alpha}(s)
\mbox{d}s^{\alpha}.
$$
According to the Theorem \ref{teorema2.4}, we get
$$
\frac{\partial} {\partial t^{\beta}}
\Big(
\int\limits_{\gamma_{t_0,t}}^{}
\chi(t_0,s)F_{\alpha}(s)
\mbox{d}s^{\alpha}
\Big)
=
\chi(t_0,t)F_{\beta}(t).
$$
It follows that
$$
\frac{\partial x} {\partial t^{\beta}}(t)
=
M_{\beta}(t)\chi(t,t_0)x_0
+
$$
$$
+
M_{\beta}(t)\chi(t,t_0)
\int\limits_{\gamma_{t_0,t}}^{}
\chi(t_0,s)F_{\alpha}(s)
\mbox{d}s^{\alpha}
+
\chi(t,t_0)\chi(t_0,t)F_{\beta}(t)
$$
$$
=
M_{\beta}(t)
\Big(
\chi(t,t_0)x_0
+
\int\limits_{\gamma_{t_0,t}}^{}
\chi(t,s)F_{\alpha}(s)
\mbox{d}s^{\alpha}
\Big)
+
F_{\beta}(t)
=
M_{\beta}(t)x(t)
+
F_{\beta}(t).
$$
One verifies easily the initial condition $x(t_0)=x_0$.
\end{proof}

\section{Controlled nonautonomous linear PDE \\system of first order}

Our main results include generalizations to multitime case of
the single-time control (see, for example, \cite{2}, \cite{9}) in the vision
of Lawrence C. Evans, Lev S. Pontryagin. They are complementary
to the results in \cite{3}, \cite{4}, \cite{7}, \cite{8}, \cite{10} -- \cite{23}.
Related topics can be found in the papers \cite{1}, \cite{6}.

Let $D\subseteq \mathbb{R}^m$ be an open and convex subset, let
$M_{\alpha}:D \to \mathcal{M}_{n}(\mathbb{R})$ be $\mathcal{C}^1$ quadratic matrix functions, let
$N_{\alpha}:D \to \mathcal{M}_{n,k}(\mathbb{R})$ be $\mathcal{C}^1$ rectangular matrix functions,
and let $u_{\alpha}:D \to\mathbb{R}^k=\mathcal{M}_{k,1}(\mathbb{R})$ be $\mathcal{C}^1$
vector functions, all indexed after $\alpha=\overline{1,m}$.

We consider the evolution PDE system
\begin{equation}
\label{aaa1}
\frac{\partial x}{\partial t^{\alpha}}
=
M_{\alpha}(t)x+N_{\alpha}(t)u_{\alpha}(t),
\quad
\forall \alpha=\overline{1,m}.
\end{equation}
Its complete integrability conditions are equivalent to

\begin{equation*}
\begin{split}
\frac{\partial M_{\alpha}}{\partial t^{\beta}}(t)
+
M_{\alpha}(t)M_{\beta}(t)
=
\frac{\partial M_{\beta}}{\partial t^{\alpha}}(t)
+
M_{\beta}(t)M_{\alpha}(t),
\end{split}
\end{equation*}
\begin{equation}
\label{aaa2}
\begin{split}
& M_{\alpha}(t)N_{\beta}(t)u_{\beta}(t)
+
\frac{\partial N_{\alpha}}{\partial t^{\beta}}(t)
u_{\alpha}(t)+
N_{\alpha}(t)
\frac{\partial u_{\alpha}}{\partial t^{\beta}}(t)\\
&=
M_{\beta}(t)N_{\alpha}(t)u_{\alpha}(t)
+
\frac{\partial N_{\beta}}{\partial t^{\alpha}}(t)
u_{\beta}(t)+
N_{\beta}(t)
\frac{\partial u_{\beta}}{\partial t^{\alpha}}(t),
\end{split}
\end{equation}
$\forall t\in D, \,\, \forall \alpha,\beta=\overline{1,m}$.

\begin{definition}
\label{definitia3.1} {\it Suppose that the matrix functions
$M_\alpha(\cdot)$ verify the relations} $(\ref{aa6})$,
$\forall t\in D, \,\, \forall \alpha,\beta=\overline{1,m}$.
{\it The vector space}
$$\mathcal{U} =
\Big \{u=(u_{\alpha})_{\alpha=\overline{1,m}}
\,\,
\Big |
\,
u_{\alpha}:D \to \mathbb{R}^k=\mathcal{M}_{k,1}(\mathbb{R}),\,
\mbox{\rm of class}\,\, \mathcal{C}^1, \forall \alpha=\overline{1,m}
$$
$$
\mbox{ \rm and which verify the relations $(\ref{aaa2})$ for all}\,\, \alpha, \beta
\Big \}
$$
{\it is called the control space}.
\end{definition}

From the Theorem \ref{teorema2.6}, we obtain immediately

\begin{theorem}
\label{teorema3.1} {\it If the matrix functions
$M_\alpha(\cdot)$ verify the relations} $(\ref{aa6})$,
$\forall t\in D, \,\, \forall \alpha,\beta=\overline{1,m}$
{\it and $u=(u_{\alpha})_{\alpha=\overline{1,m}}$
is a control, then the Cauchy problem}
\begin{equation*}
\frac{\partial x}{\partial t^{\alpha}}
=
M_{\alpha}(t)x+N_{\alpha}(t)u_{\alpha}(t),
\quad
\forall \alpha=\overline{1,m}.
\end{equation*}
\begin{equation*}
x(t_0)=x_0   \quad \quad (t_0\in D, \,\, x_0\in \mathbb{R}^n)
\end{equation*}
{\it has a unique solution
$$
x:D\to \mathbb{R}^n,\,\,x(t)=\chi(t,t_0)x_0
+
\int\limits_{\gamma_{t_0,t}}^{}
\chi(t,s)N_{\alpha}(s)u_{\alpha}(s)
\mbox{d}s^{\alpha},
$$
{\it where $\gamma_{t_0,t}$ is a piecewise $\mathcal{C}^1$ curve,
included in $D$, covered from $t_0$ to $t$}.

{\it The curvilinear integral $\displaystyle \int\limits_{\gamma}^{}
\chi(t,s)N_{\alpha}(s)u_{\alpha}(s))
\mbox{d}s^{\alpha}$ is path independent and the solution
$x(\cdot)$ is of class} $\mathcal{C}^2$}.
\end{theorem}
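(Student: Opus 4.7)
The plan is to reduce Theorem 3.1 to a direct application of Theorem 2.6, by packaging the forcing term $N_{\alpha}(t)u_{\alpha}(t)$ as a single free term $F_{\alpha}(t)$ in the linear PDE system (\ref{aa15}). I would set
\[
F_{\alpha}(t):=N_{\alpha}(t)u_{\alpha}(t), \qquad \alpha=\overline{1,m},
\]
so that (\ref{aaa1}) becomes exactly (\ref{aa15}). Since $N_\alpha$ and $u_\alpha$ are both of class $\mathcal{C}^1$ on $D$, so is the product $F_\alpha$, which is the regularity hypothesis required by Theorem \ref{teorema2.6} (and by Theorem \ref{teorema2.5} that stands behind it).

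The core step is to verify that the pair $(M_\alpha, F_\alpha)$ satisfies the complete integrability relations (\ref{aa6}) and (\ref{aa7}). Condition (\ref{aa6}) is assumed directly in the theorem. For (\ref{aa7}), I would apply the product rule, writing
\[
\frac{\partial F_{\alpha}}{\partial t^{\beta}}(t)
=\frac{\partial N_{\alpha}}{\partial t^{\beta}}(t)\,u_{\alpha}(t)
+N_{\alpha}(t)\,\frac{\partial u_{\alpha}}{\partial t^{\beta}}(t),
\]
and similarly with $\alpha$ and $\beta$ interchanged. Substituting these into the relation
\[
M_{\alpha}(t)F_{\beta}(t)+\frac{\partial F_{\alpha}}{\partial t^{\beta}}(t)
=M_{\beta}(t)F_{\alpha}(t)+\frac{\partial F_{\beta}}{\partial t^{\alpha}}(t),
\]
one obtains term-by-term exactly the identity (\ref{aaa2}). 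Since $u\in\mathcal{U}$ by assumption, (\ref{aaa2}) holds, hence (\ref{aa7}) is satisfied.

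With both integrability conditions verified, Theorem \ref{teorema2.6} applies verbatim to the Cauchy problem at hand. It provides existence and uniqueness of a solution $x:D\to\mathbb{R}^n$, the explicit representation formula
\[
x(t)=\chi(t,t_0)x_0+\int_{\gamma_{t_0,t}}\chi(t,s)N_{\alpha}(s)u_{\alpha}(s)\,\mbox{d}s^{\alpha},
\]
path independence of the curvilinear integral (which is stated in Theorem \ref{teorema2.6} for $\chi(t,s)F_\alpha(s)$, and here $F_\alpha=N_\alpha u_\alpha$), and class $\mathcal{C}^2$ regularity of $x(\cdot)$.

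The only point that requires any real work is the algebraic matching of (\ref{aa7}) with (\ref{aaa2}) via the Leibniz expansion of $\partial F_\alpha/\partial t^\beta$; everything else is a bookkeeping invocation of the earlier theorem. I expect no essential obstacle, since the definition of the control space $\mathcal{U}$ was engineered precisely to make this reduction work.
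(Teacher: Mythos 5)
Your reduction is correct and is exactly what the paper intends: it derives Theorem \ref{teorema3.1} ``immediately'' from Theorem \ref{teorema2.6} by taking $F_\alpha = N_\alpha u_\alpha$, with the Leibniz expansion of $\partial F_\alpha/\partial t^\beta$ turning condition (\ref{aa7}) into exactly (\ref{aaa2}), which holds by the definition of the control space. Nothing further is needed.
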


Further, in this paper, $D$ will be an open and convex subset of $\mathbb{R}^m$,
the $\mathcal{C}^1$ quadratic matrix functions
$M_{\alpha}:D \to \mathcal{M}_{n}(\mathbb{R})$, $\forall \alpha=\overline{1,m}$ will verify the relations (\ref{aa6}), $\forall t\in D, \,\, \forall \alpha,\beta=\overline{1,m}$
and the rectangular matrix functions $N_{\alpha}:D \to \mathcal{M}_{n,k}(\mathbb{R})$, will be of class
$\mathcal{C}^1$, $\forall \alpha=\overline{1,m}$.

\begin{definition}
\label{definitia3.2}
{\it The pair $(s,y),\,\,s\in D,\,\,y\in\mathbb{R}^n$
is called phase of the PDE system} $(\ref{aaa1})$.

$a)$ {\it Let $(t_0,x_0)$, $(s,y)$ $\in D \times \mathbb{R}^n$. We say that the phase
$(t_0,x_0)$ transfers to the phase $(s,y)$ if the Cauchy problems
$\{(\ref{aaa1}), x(t_0)=x_0 \}$ and} $\{(\ref{aaa1}), x(s)=y \}$
{\it have the same solution (for the same control $u(\cdot)$); or,
equivalently, the solution $x(t)$ of the Cauchy problem
} $\{(\ref{aaa1}), x(t_0)=x_0 \}$
{\it verifies also the condition $x(s)=y$. We will say that the control $u(\cdot)$
transfers the phase $(t_0,x_0)$ into the phase $(s,y)$}.

$b)$ {\it The phase $(t,x)$ is called reachable (respectively pseudo-reachable) if there exists a point
$t_0 \in D$, with $t_0^{\alpha}<t^{\alpha}$, $\forall \alpha$,
(respectively, if there exists a point $t_0 \in D$, $t_0\neq t$),
and there exists a control $u(\cdot)$ which transfers the phase $(t_0,0)$ into the phase $(t,x)$}.

$c)$ {\it The phase $(t,x)$ is called controllable (respectively, pseudo-controllable)
if there exists a point
$s\in D$, with $s^{\alpha}>t^{\alpha}$, $\forall \alpha$,
(respectively, if there exists a point $s \in D$, $s \neq t$),
and a control $u(\cdot)$ which transfers the phase $(t,x)$ into the phase $(s,0)$}.

$d)$ {\it Let $t_0,t \in D$, with $t_0^{\alpha}<t^{\alpha}$, $\forall \alpha$, (respectively, let $t_0,t \in D$, $t_0\neq t$)}.

{\it The PDE system $(\ref{aaa1})$ is called completely reachable
(respectively completely pseudo-reachable) from $t_0$ to $t$
if for any point $x \in \mathbb{R}^n$, the phase $(t_0,0)$ transfers to the phase $(t,x)$, i.e.,
for any $x$, the phase $(t,x)$ is reachable (respectively,  pseudo-reachable) with the same $t_0$}.

$e)$ {\it Let $t\in D$.} {\it The PDE system $(\ref{aaa1})$ is called completely reachable
(respectively, completely pseudo-reachable) at the moment $t$, if for any $t_0\in D$, with
$t_0^{\alpha}<t^{\alpha}$, $\forall \alpha$,
(respectively $\forall t_0\in D$, $t_0\neq t$),
and for any $x \in \mathbb{R}^n$, the phase $(t_0,0)$ transfers into the phase $(t,x)$}.

$f)$ {\it Let $t_0,t \in D$, with $t_0^{\alpha}<t^{\alpha}$, $\forall \alpha$ (respectively, let $t_0,t \in D$, $t_0\neq t$)}.

{\it The PDE system $(\ref{aaa1})$ is called completely controllable (respectively, completely pseudo-controllable) from $t_0$ to $t$
if for any point $x \in \mathbb{R}^n$, the phase $(t_0,x)$ transfers into the phase $(t,0)$, i.e.,
for any point $x$ the phase $(t_0,x)$ is controllable (respectively, pseudo-controllable) with the same $t$}.

$g)$ {\it Let $t_0\in D$. The PDE system $(\ref{aaa1})$ is called completely controllable
(respectively, completely pseudo-controllable) at the moment $t_0$, if
$\forall t\in D$, with $t^{\alpha}>t_0^{\alpha}$, $\forall \alpha$,
(respectively, $\forall t\in D$, $t_0\neq t$), and for any point $x \in \mathbb{R}^n$, the phase $(t_0,x)$
transfers into the phase $(t,0)$}.

$h)$ {\it The PDE system $(\ref{aaa1})$ is called completely reachable (respectively, completely pseudo-reachable) if it is
completely reachable (respectively, completely pseudo-reachable) at any moment of $D$.}

{\it The PDE system $(\ref{aaa1})$ is called completely controllable (respectively, completely pseudo-controllable) if it is completely
controllable (respectively, completely pseudo-controllable) at any moment of $D$.}
\end{definition}

The multitime control property does not only depend on the dimensions
$m$ and $n$ but on how matrices $M_\alpha$ and $N_\alpha$ interact.

The phase $(t_0,x_0)$ transfers into the phase $(t_1,y)$ $\Longleftrightarrow$
$\exists u(\cdot)$=($u_{\alpha}(\cdot)$) a control such that the solution
$x(\cdot)$ of the problem $\{(\ref{aaa1}), x(t_0)=x_0 \}$ verifies also $x(t_1)=y$, equivalent to
$$\exists u(\cdot)=(u_{\alpha}(\cdot))\,\,\mbox{ a control such that}$$
$$
x(t)=\chi(t,t_0)x_0 + \int\limits_{\gamma_{t_0,t}}^{}
\chi(t,t_0)\chi(t_0,s)N_{\alpha}(s)u_{\alpha}(s)\,\mbox{d}s^{\alpha}\,\,
\mbox{and}\,\, x(t_1)=y
$$
$$
\Longleftrightarrow
\exists
u(\cdot)=(u_{\alpha}(\cdot))\,\,\mbox{a control such that}
$$
$$
y=
\chi(t_1,t_0)
\Big(
x_0
+
\int\limits_{\gamma_{t_0,t_1}}^{}
\chi(t_0,s)N_{\alpha}(s)
u_{\alpha}(s)
\,\mbox{d}s^{\alpha}
\Big)
$$
$$
\Longleftrightarrow
\exists
u(\cdot)=(u_{\alpha}(\cdot))\,\,
\mbox{a control such that}
$$
$$
\chi(t_0,t_1)y-x_0
=
\int\limits_{\gamma_{t_0,t_1}}^{}
\chi(t_0,s)N_{\alpha}(s)
u_{\alpha}(s)
\,\mbox{d}s^{\alpha}.
$$

We introduce the set
\begin{equation*}
\mathcal{V}(t_0,t):=
\Bigg \{
\int\limits_{\gamma_{t_0,t}}^{}
\chi(t_0,s)N_{\alpha}(s)
u_{\alpha}(s)
\,\mbox{d}s^{\alpha}
\,
\Big|
\,
(u_{\alpha})_{\alpha =\overline{1,m}}\,
\mbox{ is a control}
\Bigg \}.
\end{equation*}
The set $\mathcal{V}(t_0,t)$ is a vector subspace of $\mathbb{R}^n$. It is called
{\it the controllability space}. Since the curvilinear integral is path independent, we remark that $\mathcal{V}(t_0,t)$
does not depend on the curve $\gamma_{t_0,t}$, which joins $t_0$ to $t$,
but depends on the multitimes $t_0$ and $t$.
Also $\chi(t,t_0)\mathcal{V}(t_0,t)=\mathcal{V}(t,t_0)$.

From the foregoing arguments, it follows immediately

\begin{theorem}
\label{teorema3.2} {\it Let us consider
the system $(\ref{aaa1})$, with the matrix functions
$M_\alpha(\cdot)$ verifying the relations} $(\ref{aa6})$.

{\it $i)$ The control $\displaystyle (u_{\alpha})_{\alpha =\overline{1,m}}$
transfers the phase $(t_0,x_0)$ to the phase $(t,y)$ if and only if}
\begin{equation*}
\chi(t_0,t)y-x_0
=
\int\limits_{\gamma_{t_0,t}}^{}
\chi(t_0,s)N_{\alpha}(s)
u_{\alpha}(s)
\,\mbox{\rm d}s^{\alpha}.
\end{equation*}

{\it $ii)$ The control $\displaystyle (u_{\alpha})_{\alpha =\overline{1,m}}$
transfers the phase $(t_0,x_0)$ to the phase $(t,0)$ if and only if}
$$
x_0
+
\int\limits_{\gamma_{t_0,t}}^{}
\chi(t_0,s)N_{\alpha}(s)
u_{\alpha}(s)
\,\mbox{\rm d}s^{\alpha}
=0.
$$

{\it $iii)$ The phase $(t_0,x_0)$ transfers into the phase $(t,y)$ if and only if
$$
x_0-\chi(t_0,t)y\in \mathcal{V}(t_0,t)
$$
equivalent to
$$
y-\chi(t,t_0)x_0 \in \mathcal{V}(t,t_0).
$$

$iv)$ The phase $(t_0,x_0)$ is controllable (respectively, pseudo-controllable) if and only if
$\exists t\in D$, with $t^{\alpha}> t^{\alpha}_0$,
$\forall \alpha$ (respectively, $\exists t\in D$, $t\neq t_0$) such that
$$
x_0\in \mathcal{V}(t_0,t).
$$

$v)$ The phase $(t,y)$ is reachable (respectively, pseudo-reachable) if and only if
$\exists t_0\in D$, with $t_0^{\alpha}< t^{\alpha}$,
$\forall \alpha$ (respectively, $\exists t_0\in D$, $t_0\neq t$) such that
$$
y\in \mathcal{V}(t,t_0).
$$

$vi)$ Let $t_0, t \in D$, with $t_0^{\alpha}<t^{\alpha}$, $\forall \alpha$
(respectively, let $t_0,t \in D$, $t_0\neq t$). The PDE system is completely controllable (respectively, completely pseudo-controllable)
from the multitime $t_0$ into the multitime $t$ if and only if
$$
\mathcal{V}(t_0,t)=\mathbb{R}^n,
\quad
\mbox{ equality equivalent to }\,\, \mathcal{V}(t,t_0)=\mathbb{R}^n.
$$

$vii)$ Let $t_0,t \in D$, with $t_0^{\alpha}<t^{\alpha}$, $\forall \alpha$ (respectively, let $t_0,t \in D$, $t_0\neq t$).
The PDE system is completely reachable (respectively, completely pseudo-reachable) from the multitime $t_0$ into the multitime $t$
if and only if
$$
\mathcal{V}(t,t_0)=\mathbb{R}^n,
\quad
\mbox{ equality equivalent to }\,\, \mathcal{V}(t_0,t)=\mathbb{R}^n.
$$

$viii)$ Let $t_0,t \in D$, with $t_0^{\alpha}<t^{\alpha}$, $\forall \alpha$ (respectively, let $t_0,t \in D$, $t_0\neq t$).
The PDE system is completely controllable (respectively, completely pseudo-controllable) from the multitime $t_0$ into the multitime $t$
if and only if it is completely reachable (respectively, completely pseudo-reachable)
from $t_0$ to $t$. }
\end{theorem}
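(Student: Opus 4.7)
The plan is to derive all eight equivalences as algebraic consequences of the solution formula from Theorem \ref{teorema3.1}, the fact that $\mathcal{V}(t_0,t)$ is a vector subspace of $\mathbb{R}^n$, and the identity $\chi(t,t_0)\mathcal{V}(t_0,t)=\mathcal{V}(t,t_0)$ noted just before the statement. No further integrability or regularity analysis is required; the whole theorem is essentially a dictionary that translates the phase-transfer definitions of Definition \ref{definitia3.2} into subspace-membership statements.

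For parts $i)$ and $ii)$, I would start from the explicit representation $x(t)=\chi(t,t_0)x_0+\int_{\gamma_{t_0,t}}\chi(t,s)N_{\alpha}(s)u_{\alpha}(s)\,\mbox{d}s^{\alpha}$ supplied by Theorem \ref{teorema3.1}. Imposing the endpoint condition $x(t)=y$ and multiplying on the left by $\chi(t_0,t)=\chi(t,t_0)^{-1}$ (Proposition \ref{propozitia2.1}, $c)$) rearranges the equality into precisely the form asserted in $i)$; part $ii)$ is the specialization $y=0$.

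For $iii)$--$v)$, by the definition of $\mathcal{V}(t_0,t)$, the existence of a control satisfying the identity in $i)$ is the same as $\chi(t_0,t)y-x_0\in\mathcal{V}(t_0,t)$, which, because $\mathcal{V}(t_0,t)$ is a linear subspace, is the same as $x_0-\chi(t_0,t)y\in\mathcal{V}(t_0,t)$. Multiplying by $\chi(t,t_0)$ and invoking $\chi(t,t_0)\mathcal{V}(t_0,t)=\mathcal{V}(t,t_0)$ produces the second, symmetric formulation $y-\chi(t,t_0)x_0\in\mathcal{V}(t,t_0)$. Parts $iv)$ and $v)$ then fall out by specializing $y=0$ (respectively $x_0=0$) and quantifying over the appropriate $t$ or $t_0$ in $D$ according to Definition \ref{definitia3.2}.

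For $vi)$--$viii)$, complete controllability from $t_0$ to $t$ is the statement that the membership condition in $iv)$ holds for every $x\in\mathbb{R}^n$, which is exactly $\mathcal{V}(t_0,t)=\mathbb{R}^n$; complete reachability likewise reduces to $\mathcal{V}(t,t_0)=\mathbb{R}^n$ via $v)$. The equivalence between the two forms $\mathcal{V}(t_0,t)=\mathbb{R}^n$ and $\mathcal{V}(t,t_0)=\mathbb{R}^n$ is immediate from $\chi(t,t_0)\mathcal{V}(t_0,t)=\mathcal{V}(t,t_0)$, since $\chi(t,t_0)$ is a linear bijection and hence maps $\mathbb{R}^n$ onto $\mathbb{R}^n$; this simultaneously gives $viii)$, because both complete controllability and complete reachability from $t_0$ to $t$ have just been reduced to the same condition. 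Honestly, there is no real obstacle anywhere in the argument; the only point demanding attention is a one-line verification of $\chi(t,t_0)\mathcal{V}(t_0,t)=\mathcal{V}(t,t_0)$, obtained by factoring $\chi(t,s)=\chi(t,t_0)\chi(t_0,s)$ inside the curvilinear integral that defines $\mathcal{V}(t,t_0)$.
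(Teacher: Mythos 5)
Your proposal is correct and follows essentially the same route as the paper, which derives the theorem ``immediately'' from the chain of equivalences displayed just before its statement: the solution formula of Theorem \ref{teorema3.1}, left-multiplication by $\chi(t_0,t)=\chi(t,t_0)^{-1}$, the definition and subspace property of $\mathcal{V}(t_0,t)$, and the identity $\chi(t,t_0)\mathcal{V}(t_0,t)=\mathcal{V}(t,t_0)$. Nothing is missing.
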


According to the Theorem \ref{teorema2.4}, the curvilinear integral
$$
\int\limits_{\gamma}^{}
\chi(t_0,s)N_{\alpha}(s)
N_{\alpha}^\top(s)\chi(t_0,s)^\top
\mbox{d}s^{\alpha}
$$
is path independent if and only if, for any $\alpha,\beta=\overline{1,m}$, the following conditions are satisfied:
$$
\frac{\partial}{\partial s^{\beta}}
(\chi(t_0,s))N_{\alpha}
N_{\alpha}^\top\chi(t_0,s)^\top
+
\chi(t_0,s)
\frac{\partial N_{\alpha}}{\partial s^{\beta}}
N_{\alpha}^\top\chi(t_0,s)^\top
+
$$
$$
+
\chi(t_0,s)N_{\alpha}
\frac{\partial N_{\alpha}^\top}{\partial s^{\beta}}
\chi(t_0,s)^\top
+
\chi(t_0,s)N_{\alpha}N_{\alpha}^\top
\frac{\partial }{\partial s^{\beta}}
(\chi(t_0,s)^\top)
$$
$$
=
\frac{\partial}{\partial s^{\alpha}}
(\chi(t_0,s))N_{\beta}
N_{\beta}^\top\chi(t_0,s)^\top
+
\chi(t_0,s)
\frac{\partial N_{\beta}}{\partial s^{\alpha}}
N_{\beta}^\top\chi(t_0,s)^\top
+
$$
$$
+
\chi(t_0,s)N_{\beta}
\frac{\partial N_{\beta}^\top}{\partial s^{\alpha}}
\chi(t_0,s)^\top
+
\chi(t_0,s)N_{\beta}N_{\beta}^\top
\frac{\partial }{\partial s^{\alpha}}
(\chi(t_0,s)^\top)
$$
or
$$
-\chi(t_0,s)M_{\beta}N_{\alpha}
N_{\alpha}^\top\chi(t_0,s)^\top
+
\chi(t_0,s)
\frac{\partial N_{\alpha}}{\partial s^{\beta}}
N_{\alpha}^\top\chi(t_0,s)^\top
+
$$
$$
+
\chi(t_0,s)N_{\alpha}
\frac{\partial N_{\alpha}^\top}{\partial s^{\beta}}
\chi(t_0,s)^\top
-
\chi(t_0,s)N_{\alpha}N_{\alpha}^\top
M^\top_{\beta}
\chi(t_0,s)^\top
$$
$$
=
-\chi(t_0,s)M_{\alpha}N_{\beta}
N_{\beta}^\top\chi(t_0,s)^\top
+
\chi(t_0,s)
\frac{\partial N_{\beta}}{\partial s^{\alpha}}
N_{\beta}^\top\chi(t_0,s)^\top
+
$$
$$
+
\chi(t_0,s)N_{\beta}
\frac{\partial N_{\beta}^\top}{\partial s^{\alpha}}
\chi(t_0,s)^\top
-
\chi(t_0,s)N_{\beta}N_{\beta}^\top
M^\top_{\alpha}
\chi(t_0,s)^\top.
$$
Since the fundamental matrix $\chi(t_0,s)$ is invertible, the
foregoing equality is equivalent to
$$
-M_{\beta}N_{\alpha}
N_{\alpha}^\top
+
\frac{\partial N_{\alpha}}{\partial s_{\beta}}
N_{\alpha}^\top
+
N_{\alpha}
\frac{\partial N_{\alpha}^\top}{\partial s_{\beta}}
-
N_{\alpha}N_{\alpha}^\top
M^\top_{\beta}
$$
$$
=
-M_{\alpha}N_{\beta}
N_{\beta}^\top
+
\frac{\partial N_{\beta}}{\partial s_{\alpha}}
N_{\beta}^\top
+
N_{\beta}
\frac{\partial N_{\beta}^\top}{\partial s_{\alpha}}
-
N_{\beta}N_{\beta}^\top
M^\top_{\alpha}.
$$
In this way, we have proved

\begin{proposition}
\label{propozitia3.1} {\it Let $t_0 \in D$, fixed.
The curvilinear integral
$$
\int\limits_{\gamma }^{}
\chi(t_0,s)N_{\alpha}(s)
N_{\alpha}^\top(s)\chi(t_0,s)^\top
\mbox{\rm d}s^{\alpha}
$$
is path independent on $D$} ({\it in the sense of definition $\ref{definitia2.2}$})
{\it if and only if, for any $\alpha,\beta=\overline{1,m}$, the relations
\begin{equation}
\label{aaa4}
\begin{split}
M_{\alpha}N_{\beta}
N_{\beta}^\top
+
\frac{\partial N_{\alpha}}{\partial s^{\beta}}
N_{\alpha}^\top
+
N_{\alpha}
\frac{\partial N_{\alpha}^\top}{\partial s^{\beta}}
+
N_{\beta}N_{\beta}^\top
M^\top_{\alpha}
\\
=
M_{\beta}N_{\alpha}
N_{\alpha}^\top
+
\frac{\partial N_{\beta}}{\partial s^{\alpha}}
N_{\beta}^\top
+
N_{\beta}
\frac{\partial N_{\beta}^\top}{\partial s^{\alpha}}
+
N_{\alpha}N_{\alpha}^\top
M^\top_{\beta}
\end{split}
\end{equation}
are verified on $D$. This is equivalent to,
\begin{equation}
\label{aaa5}
\begin{split}
M_{\alpha}N_{\beta}
N_{\beta}^\top
+
\frac{\partial N_{\alpha}}{\partial s^{\beta}}
N_{\alpha}^\top
+
\Big (
M_{\alpha}N_{\beta}
N_{\beta}^\top
+
\frac{\partial N_{\alpha}}{\partial s^{\beta}}
N_{\alpha}^\top
\Big )^\top
\\
=
M_{\beta}N_{\alpha}
N_{\alpha}^\top
+
\frac{\partial N_{\beta}}{\partial s^{\alpha}}
N_{\beta}^\top
+
\Big(
M_{\beta}N_{\alpha}
N_{\alpha}^\top
+
\frac{\partial N_{\beta}}{\partial s^{\alpha}}
N_{\beta}^\top
\Big )^\top
\end{split}
\end{equation}
or
\begin{equation}
\label{aaa6}
\begin{split}
M_{\alpha}N_{\beta}
N_{\beta}^\top
+
N_{\alpha}
\frac{\partial N_{\alpha}^\top}{\partial s^{\beta}}
+
\Big (
M_{\alpha}N_{\beta}
N_{\beta}^\top
+
N_{\alpha}
\frac{\partial N_{\alpha}^\top}{\partial s^{\beta}}
\Big )^\top
\\
=
M_{\beta}N_{\alpha}
N_{\alpha}^\top
+
N_{\beta}
\frac{\partial N_{\beta}^\top}{\partial s^{\alpha}}
+
\Big(
M_{\beta}N_{\alpha}
N_{\alpha}^\top
+
N_{\beta}
\frac{\partial N_{\beta}^\top}{\partial s^{\alpha}}
\Big )^\top.
\end{split}
\end{equation}
It is sufficient, for example, that for any $\alpha,\beta=\overline{1,m}$, to have
\begin{equation}
\label{aaa7}
\begin{split}
M_{\alpha}N_{\beta}
N_{\beta}^\top
+
\frac{\partial N_{\alpha}}{\partial s^{\beta}}
N_{\alpha}^\top
=
M_{\beta}N_{\alpha}
N_{\alpha}^\top
+
\frac{\partial N_{\beta}}{\partial s^{\alpha}}
N_{\beta}^\top
\end{split}
\end{equation}
or }
\begin{equation}
\label{aaa8}
\begin{split}
M_{\alpha}N_{\beta}
N_{\beta}^\top
+
N_{\alpha}
\frac{\partial N_{\alpha}^\top}{\partial s^{\beta}}
=
M_{\beta}N_{\alpha}
N_{\alpha}^\top
+
N_{\beta}
\frac{\partial N_{\beta}^\top}{\partial s^{\alpha}}.
\end{split}
\end{equation}
\end{proposition}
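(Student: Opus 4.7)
The plan is to apply Theorem~\ref{teorema2.4} to the matrix-valued one-form whose $\alpha$-component is $P_\alpha(s):=\chi(t_0,s)N_\alpha(s)N_\alpha^\top(s)\chi(t_0,s)^\top$, which reduces path independence on the convex set $D$ to the closedness condition $\partial_\beta P_\alpha = \partial_\alpha P_\beta$ for all $\alpha,\beta$. So the first step is simply to differentiate $P_\alpha$ with respect to $s^\beta$ using the Leibniz rule, producing four terms, and likewise for $P_\beta$ with respect to $s^\alpha$.

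Next I would substitute the derivatives of the fundamental matrix. By Proposition~\ref{propozitia2.1}(d) one has $\partial_\beta \chi(t_0,s) = -\chi(t_0,s)M_\beta(s)$ and, by transposing, $\partial_\beta \chi(t_0,s)^\top = -M_\beta(s)^\top\chi(t_0,s)^\top$. Plugging these into the expanded derivatives, every term is sandwiched between $\chi(t_0,s)$ on the left and $\chi(t_0,s)^\top$ on the right. Since the fundamental matrix is invertible (Proposition~\ref{propozitia2.1}(c)), I may cancel these outer factors and obtain precisely the identity~(\ref{aaa4}), establishing the stated equivalence with path independence.

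To pass from~(\ref{aaa4}) to~(\ref{aaa5}), I would observe that both sides of~(\ref{aaa4}) are manifestly symmetric matrices: the $\alpha$-side can be written as
\[
\Bigl(M_\alpha N_\beta N_\beta^\top + \tfrac{\partial N_\alpha}{\partial s^\beta}N_\alpha^\top\Bigr) + \Bigl(M_\alpha N_\beta N_\beta^\top + \tfrac{\partial N_\alpha}{\partial s^\beta}N_\alpha^\top\Bigr)^\top,
\]
because $(M_\alpha N_\beta N_\beta^\top)^\top = N_\beta N_\beta^\top M_\alpha^\top$ and $(\partial_\beta N_\alpha \cdot N_\alpha^\top)^\top = N_\alpha\,\partial_\beta N_\alpha^\top$. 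Identifying the analogous regrouping on the $\beta$-side yields~(\ref{aaa5}); grouping the other pair of cross-terms gives~(\ref{aaa6}). The sufficient conditions~(\ref{aaa7}) and~(\ref{aaa8}) then follow trivially, since an equality $A=B$ immediately implies $A+A^\top = B+B^\top$.

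The computation is entirely mechanical, so there is no genuine obstacle; the only place where care is required is bookkeeping during the expansion of the eight product-rule terms and verifying the transposition identities so that the symmetric reformulations~(\ref{aaa5}) and~(\ref{aaa6}) line up with~(\ref{aaa4}) exactly. Because $D$ is convex (hence simply connected), the closedness condition produced by Theorem~\ref{teorema2.4} is both necessary and sufficient, so no further topological argument is needed.
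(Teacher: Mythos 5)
Your proposal is correct and follows essentially the same route as the paper: apply Theorem \ref{teorema2.4} to $P_\alpha(s)=\chi(t_0,s)N_\alpha(s)N_\alpha^\top(s)\chi(t_0,s)^\top$, expand by the Leibniz rule, substitute $\partial_{s^\beta}\chi(t_0,s)=-\chi(t_0,s)M_\beta(s)$ from Proposition \ref{propozitia2.1}(d), and cancel the invertible outer factors to land on (\ref{aaa4}), with the symmetric regroupings giving (\ref{aaa5})--(\ref{aaa8}). No gaps.
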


\begin{proposition}
\label{propozitia3.2} {\it Let us suppose that the matrices
$M_\alpha(\cdot)$ verify the relations} $(\ref{aa6})$,
$\forall t\in D, \,\, \forall \alpha,\beta=\overline{1,m}$.
{\it We fix $t_0 \in D$. For each
$v \in \mathbb{R}^n$ and $\alpha=\overline{1,m}$,
we consider the functions}
\begin{equation*}
u_{\alpha,v}: D \to \mathbb{R}^k,
\quad
u_{\alpha,v}(s)
=
N_{\alpha}^\top(s)\chi(t_0,s)^\top v,
\quad
\forall
s \in D.
\end{equation*}

{\it The following statements are equivalent}

$i)$ {\it For any $v \in \mathbb{R}^n$, the family
$(u_{\alpha,v})_{\alpha=\overline{1,m}}$ is
a control for the PDE system} $(\ref{aaa1})$.

$ii)$ {\it For any $\alpha,\beta=\overline{1,m}$,
the relations} $(\ref{aaa4})$ {\it are satisfied on the set $D$, i.e.,}
\begin{equation*}
\begin{split}
M_{\alpha}N_{\beta}
N_{\beta}^\top
+
\frac{\partial N_{\alpha}}{\partial s^{\beta}}
N_{\alpha}^\top
+
N_{\alpha}
\frac{\partial N_{\alpha}^\top}{\partial s^{\beta}}
+
N_{\beta}N_{\beta}^\top
M^\top_{\alpha}
=
\\
=
M_{\beta}N_{\alpha}
N_{\alpha}^\top
+
\frac{\partial N_{\beta}}{\partial s^{\alpha}}
N_{\beta}^\top
+
N_{\beta}
\frac{\partial N_{\beta}^\top}{\partial s^{\alpha}}
+
N_{\alpha}N_{\alpha}^\top
M^\top_{\beta}.
\end{split}
\end{equation*}

$iii)$ {\it The curvilinear integral
$$
\int\limits_{\gamma}^{}
\chi(t_0,s)N_{\alpha}(s)
N_{\alpha}^\top(s)\chi(t_0,s)^\top
\mbox{\rm d}s^{\alpha}
$$
is path independent on the set $D$}.
\end{proposition}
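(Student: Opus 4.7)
Since the equivalence (ii) $\Leftrightarrow$ (iii) is already the content of Proposition \ref{propozitia3.1}, the substantive part of the proof is the equivalence (i) $\Leftrightarrow$ (ii). My plan is to substitute the explicit formula $u_{\alpha,v}(s)=N_\alpha^\top(s)\chi(t_0,s)^\top v$ into the control-compatibility relations (\ref{aaa2}) and match the resulting identity with (\ref{aaa4}).

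First I would verify the regularity prerequisite: since $N_\alpha$ is $\mathcal{C}^1$ and $\chi(t_0,\cdot)$ is $\mathcal{C}^2$ (by (\ref{aa10}) and Theorem \ref{teorema2.5}), each $u_{\alpha,v}$ is automatically of class $\mathcal{C}^1$, so membership of $(u_{\alpha,v})_\alpha$ in the control space $\mathcal{U}$ (Definition \ref{definitia3.1}) reduces to checking (\ref{aaa2}). Next I would compute the needed derivatives. By Proposition \ref{propozitia2.1}(d), transposing the identity yields $\partial\chi(t_0,s)^\top/\partial s^\beta = -M_\beta^\top(s)\chi(t_0,s)^\top$, so
$$\frac{\partial u_{\alpha,v}}{\partial s^\beta}(s) = \frac{\partial N_\alpha^\top}{\partial s^\beta}(s)\chi(t_0,s)^\top v - N_\alpha^\top(s)M_\beta^\top(s)\chi(t_0,s)^\top v.$$
Substituting $u_{\alpha,v}$ and this derivative into the left-hand side of (\ref{aaa2}) and collecting gives
$$\Bigl[M_\alpha N_\beta N_\beta^\top + \frac{\partial N_\alpha}{\partial s^\beta}N_\alpha^\top + N_\alpha\frac{\partial N_\alpha^\top}{\partial s^\beta} - N_\alpha N_\alpha^\top M_\beta^\top\Bigr]\chi(t_0,s)^\top v,$$
and the right-hand side is the same bracketed expression with $\alpha\leftrightarrow\beta$, acting on the same vector $\chi(t_0,s)^\top v$.

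To conclude, I would use that $\chi(t_0,s)^\top$ is invertible by Proposition \ref{propozitia2.1}(c), so $\{\chi(t_0,s)^\top v : v\in\mathbb{R}^n\}=\mathbb{R}^n$ at every $s\in D$; therefore equality of the two displayed vectors for all $v$ is equivalent to the corresponding matrix identity obtained by stripping $\chi(t_0,s)^\top v$ from both sides. Moving the term $-N_\alpha N_\alpha^\top M_\beta^\top$ across (and, symmetrically, $-N_\beta N_\beta^\top M_\alpha^\top$ on the other side) rewrites this matrix identity as precisely (\ref{aaa4}). The quantifier over $v\in\mathbb{R}^n$ is what upgrades a pointwise vector equality into the matrix condition (ii); conversely, once (\ref{aaa4}) holds, the same computation run backwards shows that $(u_{\alpha,v})_\alpha$ satisfies (\ref{aaa2}) for every $v$, i.e.\ it is a control. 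The only real hazard is bookkeeping signs and transposes when applying Proposition \ref{propozitia2.1}(d); past that, the argument is direct substitution followed by a single invocation of invertibility of the fundamental matrix.
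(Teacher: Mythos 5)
Your proof is correct and follows essentially the same route as the paper: substitute $u_{\alpha,v}$ into the control relations $(\ref{aaa2})$, use Proposition \ref{propozitia2.1}(d) transposed to compute $\partial(\chi(t_0,s)^\top)/\partial s^\beta$, and exploit the invertibility of $\chi(t_0,s)^\top$ (the paper phrases this via taking $A=I_n$ and right-multiplying by the inverse, you phrase it via surjectivity of $v\mapsto\chi(t_0,s)^\top v$ — same content) to pass from the vector identity for all $v$ to the matrix identity $(\ref{aaa4})$, with $ii)\Leftrightarrow iii)$ delegated to Proposition \ref{propozitia3.1}. No gaps.
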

\begin{proof} The family $\displaystyle (u_{\alpha,v})_{\alpha=\overline{1,m}}$
is a control if and only if it verifies,
$\forall \alpha,\beta=\overline{1,m}$, the relations $(\ref{aaa2})$ on the set $D$, i.e.,
$$
M_{\alpha}N_{\beta}
N_{\beta}^\top\chi(t_0,s)^\top v
+
\frac{\partial N_{\alpha}}{\partial s^{\beta}}
N_{\alpha}^\top\chi(t_0,s)^\top v
+
$$
$$
+
N_{\alpha}
\frac{\partial N_{\alpha}^\top}{\partial s^{\beta}}
\chi(t_0,s)^\top v
+
N_{\alpha}N^\top_{\alpha}
\frac{\partial}{\partial s^{\beta}}
(\chi(t_0,s)^\top)v
=
$$
$$
=
M_{\beta}N_{\alpha}
N_{\alpha}^\top\chi(t_0,s)^\top v
+
\frac{\partial N_{\beta}}{\partial s^{\alpha}}
N_{\beta}^\top\chi(t_0,s)^\top v
+
$$
$$
+
N_{\beta}
\frac{\partial N_{\beta}^\top}{\partial s^{\alpha}}
\chi(t_0,s)^\top v
+
N_{\beta}N^\top_{\beta}
\frac{\partial}{\partial s^{\alpha}}
(\chi(t_0,s)^\top)v
$$
equivalent to
$$
M_{\alpha}N_{\beta}
N_{\beta}^\top\chi(t_0,s)^\top v
+
\frac{\partial N_{\alpha}}{\partial s^{\beta}}
N_{\alpha}^\top\chi(t_0,s)^\top v
+
$$
$$
+
N_{\alpha}
\frac{\partial N_{\alpha}^\top}{\partial s^{\beta}}
\chi(t_0,s)^\top v
-
N_{\alpha}N^\top_{\alpha}
M^\top_{\beta}
\chi(t_0,s)^\top v
=
$$
$$
=
M_{\beta}N_{\alpha}
N_{\alpha}^\top\chi(t_0,s)^\top v
+
\frac{\partial N_{\beta}}{\partial s^{\alpha}}
N_{\beta}^\top\chi(t_0,s)^\top v
+
$$
$$
+
N_{\beta}
\frac{\partial N_{\beta}^\top}{\partial s^{\alpha}}
\chi(t_0,s)^\top v
-
N_{\beta}N^\top_{\beta}
M^\top_{\alpha}
\chi(t_0,s)^\top v
$$
or
$$
\Big (
M_{\alpha}N_{\beta}
N_{\beta}^\top
+
\frac{\partial N_{\alpha}}{\partial s^{\beta}}
N_{\alpha}^\top
+
N_{\alpha}
\frac{\partial N_{\alpha}^\top}{\partial s^{\beta}}
-
N_{\alpha}N^\top_{\alpha}
M^\top_{\beta}
\Big )
\chi(t_0,s)^\top v
=
$$
$$
=
\Big (
M_{\beta}N_{\alpha}
N_{\alpha}^\top
+
\frac{\partial N_{\beta}}{\partial s^{\alpha}}
N_{\beta}^\top
+
N_{\beta}
\frac{\partial N_{\beta}^\top}{\partial s^{\alpha}}
-
N_{\beta}N^\top_{\beta}
M^\top_{\alpha}
\Big )
\chi(t_0,s)^\top v,
$$
equivalent to
\begin{equation*}
\begin{split}
\Big (
M_{\alpha}N_{\beta}
N_{\beta}^\top
+
\frac{\partial N_{\alpha}}{\partial s^{\beta}}
N_{\alpha}^\top
+
N_{\alpha}
\frac{\partial N_{\alpha}^\top}{\partial s^{\beta}}
+
N_{\beta}N_{\beta}^\top
M^\top_{\alpha}
\Big )
\chi(t_0,s)^\top v
\\
=
\Big (
M_{\beta}N_{\alpha}
N_{\alpha}^\top
+
\frac{\partial N_{\beta}}{\partial s^{\alpha}}
N_{\beta}^\top
+
N_{\beta}
\frac{\partial N_{\beta}^\top}{\partial s^{\alpha}}
+
N_{\alpha}N_{\alpha}^\top
M^\top_{\beta}
\Big )
\chi(t_0,s)^\top v.
\end{split}
\tag{$\ast$}
\end{equation*}
The implication $ii) \Longrightarrow i)$ follows immediately.

Let us prove $i) \Longrightarrow ii)$.
Since for each $v\in \mathbb{R}^n$, the family
$\displaystyle (u_{\alpha,v})_{\alpha=\overline{1,m}}$ is a control, it follows
that the relations $(\ast)$ hold for any $v\in \mathbb{R}^n$, whence
we deduce that for any matrix
$A \in \mathcal{M}_{n,p}(\mathbb{R})$, $\forall p$, we have
\begin{equation*}
\begin{split}
\Big (
M_{\alpha}N_{\beta}
N_{\beta}^\top
+
\frac{\partial N_{\alpha}}{\partial s^{\beta}}
N_{\alpha}^\top
+
N_{\alpha}
\frac{\partial N_{\alpha}^\top}{\partial s^{\beta}}
+
N_{\beta}N_{\beta}^\top
M^\top_{\alpha}
\Big )
\chi(t_0,s)^\top A
\\
=
\Big (
M_{\beta}N_{\alpha}
N_{\alpha}^\top
+
\frac{\partial N_{\beta}}{\partial s^{\alpha}}
N_{\beta}^\top
+
N_{\beta}
\frac{\partial N_{\beta}^\top}{\partial s^{\alpha}}
+
N_{\alpha}N_{\alpha}^\top
M^\top_{\beta}
\Big )
\chi(t_0,s)^\top A.
\end{split}
\end{equation*}
Taking $A=I_n$, we find
\begin{equation*}
\begin{split}
\Big (
M_{\alpha}N_{\beta}
N_{\beta}^\top
+
\frac{\partial N_{\alpha}}{\partial s^{\beta}}
N_{\alpha}^\top
+
N_{\alpha}
\frac{\partial N_{\alpha}^\top}{\partial s^{\beta}}
+
N_{\beta}N_{\beta}^\top
M^\top_{\alpha}
\Big )
\chi(t_0,s)^\top
\\
=
\Big (
M_{\beta}N_{\alpha}
N_{\alpha}^\top
+
\frac{\partial N_{\beta}}{\partial s^{\alpha}}
N_{\beta}^\top
+
N_{\beta}
\frac{\partial N_{\beta}^\top}{\partial s^{\alpha}}
+
N_{\alpha}N_{\alpha}^\top
M^\top_{\beta}
\Big )
\chi(t_0,s)^\top .
\end{split}
\end{equation*}
The matrix $\chi(t_0,s)^\top$ is invertible. The last equality
is multiplied in the right-hand side by $(\chi(t_0,s)^\top)^{-1}$,
obtaining the relation (\ref{aaa4}).

The equivalence of the statements $ii)$ and
$iii)$ is just the Proposition \ref{propozitia3.1}.
\end{proof}

\begin{definition}
\label{definitia3.3} {\it Suppose that, for any
$\alpha,\beta=\overline{1,m}$, the relations $(\ref{aaa4})$ are true.
The matrix function}
$$ \mathcal{C}:D \times D\to \mathcal{M}_n(\mathbb{R}),\,\,
\mathcal{C}(t_0,t):=
\int\limits_{\gamma_{t_0,t}}^{}
\chi(t_0,s)N_{\alpha}(s)
N_{\alpha}^\top(s)\chi(t_0,s)^\top
\mbox{d}s^{\alpha}
$$
{\it is called the controllability gramian}.

{\it  The matrix function
$$
\mathcal{R}(t_0,t):=
\int\limits_{\gamma_{t_0,t}}^{}
\chi(t,s)N_{\alpha}(s)
N_{\alpha}^\top(s)\chi(t,s)^\top
\mbox{d}s^{\alpha}
$$ is called the reachability gramian.}
\end{definition}

The controllability gramian is used to determine whether or not a linear PDE system is controllable.
The reachability gramian is used to determine whether or not a linear PDE system is reachable.
One observes immediately that
$$\mathcal{R}(t_0,t)=- \,\,\mathcal{C}(t,t_0), \quad \forall t_0,t\in D$$
and
$$\chi(t,t_0)\mathcal{C}(t_0,t)\chi(t,t_0)^\top
=-\,\,\mathcal{C}(t,t_0), \quad \forall t_0,t\in D .$$
Hence the matrices $\mathcal{C}(t_0,t)$, $\mathcal{C}(t,t_0)$,
$\mathcal{R}(t_0,t)$, $\mathcal{R}(t,t_0)$
have all the same rank.

\begin{definition}
\label{definitia3.4} {\it Let $A\in \mathcal{M}_{p,q}(\mathbb{R})$
be a real matrix. Denote $\mbox{\rm Im}(A)$ and $\mbox{\rm Ker}(A)$,
the image, respectively the kernel of the linear map
$$
f: \mathbb{R}^q= \mathcal{M}_{q,1}(\mathbb{R}) \to \mathbb{R}^p= \mathcal{M}_{p,1}(\mathbb{R}),\,\,f(x)=Ax.
$$
Of course, the subset $\mbox{\rm Im}(A)$ is a vector subspace
of $\mathcal{M}_{p,1}(\mathbb{R})$ generated by the columns of the matrix} $A$.
\end{definition}

\begin{theorem}
\label{teorema3.3} {\it In the conditions of
Theorem $\ref{teorema3.1}$, if
$\forall \alpha,\beta=\overline{1,m}$, the conditions $(\ref{aaa4})$
are true, then, for any $t$ and $t_0$ with
$t^{\alpha}\geq t^{\alpha}_0$,
$\forall\alpha=\overline{1,m}$ (or
$t^{\alpha}\leq t^{\alpha}_0$,
$\forall\alpha=\overline{1,m}$), we have}
$$
\mathcal{V}(t_0,t)=\mbox{\rm Im}(\mathcal{C}(t_0,t)).
$$
\end{theorem}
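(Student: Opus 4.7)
The plan is to establish the equality by a mutual-inclusion argument, leveraging Proposition \ref{propozitia3.2} (which supplies a parametric family of controls indexed by $v \in \mathbb{R}^n$) and the symmetry of $\mathcal{C}(t_0,t)$.

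For the inclusion $\mathrm{Im}(\mathcal{C}(t_0,t)) \subseteq \mathcal{V}(t_0,t)$, I would fix $v \in \mathbb{R}^n$ and consider the family $u_{\alpha,v}(s) = N_\alpha^\top(s)\chi(t_0,s)^\top v$. Proposition \ref{propozitia3.2}, combined with the hypothesis that (\ref{aaa4}) holds, asserts that this family is a genuine control; substituting it into the definition of $\mathcal{V}(t_0,t)$ yields exactly $\mathcal{C}(t_0,t) v \in \mathcal{V}(t_0,t)$. Since $v$ was arbitrary, the image of $\mathcal{C}(t_0,t)$ sits inside $\mathcal{V}(t_0,t)$.

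For the reverse inclusion, I would pass to orthogonal complements and show $\ker(\mathcal{C}(t_0,t)) \subseteq \mathcal{V}(t_0,t)^\perp$, noting that $\mathrm{Im}(\mathcal{C}(t_0,t))^\perp = \ker(\mathcal{C}(t_0,t))$ because $\mathcal{C}(t_0,t)^\top = \mathcal{C}(t_0,t)$. Given $v \in \ker(\mathcal{C}(t_0,t))$, I would exploit the ordering hypothesis $t^\alpha \geq t_0^\alpha$ together with convexity of $D$ to choose an axis-aligned (``staircase'') path $\gamma_{t_0,t}$ from $t_0$ to $t$ whose $k$-th segment varies only the coordinate $s^k$; convexity guarantees this path lies in $D$. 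By path-independence (Proposition \ref{propozitia3.1}), I may evaluate $v^\top\mathcal{C}(t_0,t)v$ along this path as
$$
v^\top \mathcal{C}(t_0,t) v
=
\sum_{k=1}^{m}
\int_{t_0^k}^{t^k}
\bigl\|
N_k^\top(\sigma_k)\chi(t_0,\sigma_k)^\top v
\bigr\|^2
\, d\sigma^k,
$$
where $\sigma_k$ parametrizes the $k$-th segment. Every integrand is non-negative and every integration interval has the same orientation; since the total equals zero, each integrand must vanish, and by continuity $N_k^\top(\sigma_k)\chi(t_0,\sigma_k)^\top v = 0$ identically on the $k$-th segment.

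Finally, for an arbitrary control $u$, I would compute $v^\top\int_{\gamma_{t_0,t}}\chi(t_0,s)N_\alpha(s)u_\alpha(s)\,ds^\alpha$ along the same staircase path (invoking the path-independence established in Theorem \ref{teorema3.1}): on segment $k$ the scalar integrand reduces to $\bigl(N_k^\top(\sigma_k)\chi(t_0,\sigma_k)^\top v\bigr)^\top u_k(\sigma_k) = 0$, so the integral vanishes and $v \in \mathcal{V}(t_0,t)^\perp$. This completes the proof. The only delicate point is the construction of the staircase path and its containment in $D$, which is precisely where the componentwise ordering hypothesis and the convexity of $D$ are used in an essential way; the reversed case $t^\alpha \leq t_0^\alpha$ is identical, since flipping the orientation of every segment flips all the signs together and the quadratic integrands are again forced to vanish.
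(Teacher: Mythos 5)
Your overall architecture is the same as the paper's: the inclusion $\mathrm{Im}(\mathcal{C}(t_0,t))\subseteq\mathcal{V}(t_0,t)$ via the explicit controls $u_{\alpha,v}(s)=N_\alpha^\top(s)\chi(t_0,s)^\top v$ supplied by Proposition \ref{propozitia3.2}, and the reverse inclusion by passing to orthogonal complements, extracting from $b^\top\mathcal{C}(t_0,t)b=0$ the identity $N_\alpha^\top(s)\chi(t_0,s)^\top b=0$ along a chosen path, and then killing the integral $\int_\gamma b^\top\chi(t_0,s)N_\alpha(s)u_\alpha(s)\,\mathrm{d}s^\alpha$ for an arbitrary control. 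That skeleton is correct, and the componentwise ordering hypothesis enters exactly where you say it does, namely to make all the quadratic contributions carry the same sign.

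There is, however, one genuine gap: your claim that ``convexity guarantees'' the axis-aligned staircase path from $t_0$ to $t$ lies in $D$ is false. Convexity of $D$ gives you the straight segment $[t_0,t]$, not the coordinate box $\prod_\alpha[t_0^\alpha,t^\alpha]$ or any staircase inside it. For instance, if $m=2$ and $D$ is a thin open convex neighborhood of the diagonal segment from $(0,0)$ to $(1,1)$, the corner point $(1,0)$ of your staircase is outside $D$, so the curve is inadmissible and every evaluation you perform along it is undefined. The repair is exactly the paper's choice: parametrize the straight segment $\gamma(\tau)=\tau(t-t_0)+t_0$, so that
$$
b^\top\mathcal{C}(t_0,t)b=\int_0^1\sum_{\alpha}(t^\alpha-t_0^\alpha)\bigl\|b^\top\chi(t_0,\gamma(\tau))N_\alpha(\gamma(\tau))\bigr\|^2\,\mathrm{d}\tau ,
$$
where all coefficients $t^\alpha-t_0^\alpha$ are nonnegative (or all nonpositive); the vanishing of the sum then forces $b^\top\chi(t_0,\gamma(\tau))N_\alpha(\gamma(\tau))=0$ for every $\tau$ and every $\alpha$ with $t^\alpha\neq t_0^\alpha$, and the final integral against an arbitrary control, taken along the same segment, collapses term by term (the $\alpha$ with $t^\alpha=t_0^\alpha$ contribute nothing because $\dot\gamma^\alpha=0$). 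With that substitution your argument goes through verbatim and coincides with the paper's proof.
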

\begin{proof} The inclusion
$\mathcal{V}(t_0,t)\subseteq \mbox{\rm Im}(\mathcal{C}(t_0,t))$
is equivalent to
$$
(\mathcal{V}(t_0,t))^\bot
\supseteq
(\mbox{\rm Im}(\mathcal{C}(t_0,t)))^\bot
=
\mbox{\rm Ker} ((\mathcal{C}(t_0,t))^\top).
$$
We have $b\in \mbox{\rm Ker} ((\mathcal{C}(t_0,t))^\top)
\Longleftrightarrow
((\mathcal{C}(t_0,t))^\top)b=0
\Longleftrightarrow
b^\top \mathcal{C}(t_0,t)=0.$ Hence $b^\top \mathcal{C}(t_0,t)b=0$.

The controllability gramian is independent on the curve $\gamma$ covered
from the multitime $t_0$
to the multitime $t$. Particularly, we fix $\gamma$ as being the straight line segment which joins
the points $t_0$, $t$, i.e.,
$\gamma(\tau)=\tau(t-t_0)+t_0$, $\tau\in [0,1]$.
It follows
$$
\mathcal{C}(t_0,t)=
\int\limits_{0}^{1}
\sum_{\alpha=1}^m
(t^{\alpha}-t^{\alpha}_0)
\chi(t_0,\gamma(\tau))
N_{\alpha}(\gamma(\tau))
N_{\alpha}^\top(\gamma(\tau))
\chi(t_0,\gamma(\tau))^\top
\mbox{d}\tau
$$
$$
=
\int\limits_{0}^{1}
\sum_{\alpha \mbox{ with } t^{\alpha}\neq t^{\alpha}_0}
(t^{\alpha}-t^{\alpha}_0)
\chi(t_0,\gamma(\tau))
N_{\alpha}(\gamma(\tau))
N_{\alpha}^\top(\gamma(\tau))
\chi(t_0,\gamma(\tau))^\top
\mbox{d}\tau
$$
On the other hand we get
$$
\int\limits_{0}^{1}
\sum_{\alpha \mbox{ with } t^{\alpha}>t^{\alpha}_0}
(t^{\alpha}-t^{\alpha}_0)
b^\top
\chi(t_0,\gamma(\tau))
N_{\alpha}(\gamma(\tau))
N_{\alpha}^\top(\gamma(\tau))
\chi(t_0,\gamma(\tau))^\top
b
\,\mbox{d}\tau
=0,
$$
or
$$
\int\limits_{0}^{1}
\sum_{\alpha \mbox{ with } t^{\alpha}>t^{\alpha}_0}
(t^{\alpha}-t^{\alpha}_0)
\Big \|
b^\top
\chi(t_0,\gamma(\tau))
N_{\alpha}(\gamma(\tau))
\Big \|^2
\mbox{d}\tau
=0.
$$
It follows that for any $\alpha$ with $t^{\alpha}\neq t^{\alpha}_0$ and, $\forall \tau \in [0,1]$, we have
$$
b^\top
\chi(t_0,\gamma(\tau))
N_{\alpha}(\gamma(\tau))
=0.
$$
Hence
$$
b^\top
\int\limits_{0}^{1}
\sum_{\alpha=1}^m
\chi(t_0,\gamma(\tau))
N_{\alpha}(\gamma(\tau))
u_{\alpha}(\gamma(\tau))
\dot{\gamma^{\alpha}}(\tau)
\mbox{d}\tau
$$
$$
= \int\limits_{0}^{1}
\sum_{\alpha \mbox{ with } t^{\alpha}\neq t^{\alpha}_0}
b^\top
\chi(t_0,\gamma(\tau))
N_{\alpha}(\gamma(\tau))
u_{\alpha}(\gamma(\tau))
(t^{\alpha}-t^{\alpha}_0)
\mbox{d}\tau
=0.
$$
Consequently $b\in (\mathcal{V}(t_0,t))^\bot$.

Let us prove the inclusion $\mbox{\rm Im}(\mathcal{C}(t_0,t))\subseteq
\mathcal{V}(t_0,t)$. For that, we select
$$w=\mathcal{C}(t_0,t)v\in \mbox{\rm Im}(\mathcal{C}(t_0,t)),\,\,
w= \int\limits_{\gamma_{t_0,t}}^{}
\chi(t_0,s)N_{\alpha}(s)
N_{\alpha}^\top(s)\chi(t_0,s)^\top
v
\,\mbox{d}s^{\alpha}.
$$
We choose
$$
u_{\alpha}(s)
=N_{\alpha}^\top(s)\chi(t_0,s)^\top v.
$$
According to the Proposition \ref{propozitia3.2}, the family
$\displaystyle (u_{\alpha})_{\alpha=\overline{1,m}}$ is
a control, hence
$$
w
=
\int\limits_{\gamma_{t_0,t}}^{}
\chi(t_0,s)N_{\alpha}(s)
u_{\alpha}(s)
\,\mbox{d}s^{\alpha}
\in \mathcal{V}(t_0,t).
$$
\end{proof}

If, for any $\alpha,\beta=\overline{1,m}$, the relations (\ref{aaa4}) are true
(hence we have also $\mathcal{V}(t_0,t)=\mbox{\rm Im}(\mathcal{C}(t_0,t))$
for $t^{\alpha}>(or <)t_0^{\alpha}$, $\forall \alpha$),
then from the Theorem \ref{teorema3.2} it follows

\begin{theorem}
\label{teorema3.4} {\it Suppose that we are in the conditions
of the Theorem $\ref{teorema3.1}$,
and furthermore, for any $\alpha,\beta=\overline{1,m}$, the conditions $(\ref{aaa4})$ are true}.

{\it $i)$ Let $t_0,t \in D$, $t_0\neq t$,
$t_0^{\alpha}\leq t^{\alpha}$,
$\forall \alpha$ or
$t^{\alpha}\leq t_0^{\alpha}$,
$\forall \alpha$.
Then the phase $(t_0,x_0)$ transfers to the phase $(t,y)$ if and only if}
$$
x_0-\chi(t_0,t)y\in \mbox{\rm Im}(\mathcal{C}(t_0,t)),
$$
{\it which is equivalent to}
$$
y - \chi(t,t_0)x_0 \in \mbox{\rm Im}(\mathcal{C}(t,t_0)).
$$

$ii)$ {\it The phase $(t_0,x_0)$ is controllable if and only if
$\exists t\in D$, with $t^{\alpha}> t^{\alpha}_0$, $\forall \alpha$, such that}
$$
x_0\in \mbox{\rm Im}(\mathcal{C}(t_0,t)).
$$

$iii)$ {\it The phase $(t,y)$ is reachable if and only if
$\exists t_0\in D$, with $t_0^{\alpha}< t^{\alpha}$, $\forall \alpha$, such that}
$$
y\in \mbox{\rm Im}(\mathcal{C}(t,t_0)).
$$

$iv)$ {\it Let $t_0,t \in D$, with $t_0^{\alpha}<t^{\alpha}$, $\forall \alpha$.
The PDE system is completely controllable from $t_0$ to $t$ if and only if}
$$
\mbox{\rm rank }\mathcal{C}(t_0,t)=n
\quad
\Big(
\Longleftrightarrow
\mathbb{R}^n=\mbox{\rm Im}(\mathcal{C}(t_0,t))
\,\Big).
$$

$v)$ {\it Let $t_0,t \in D$, with $t_0^{\alpha}<t^{\alpha}$, $\forall \alpha$.
The PDE system is completely reachable from $t_0$ into $t$ if and only if}
$$
\mbox{\rm rank }\mathcal{C}(t_0,t)=n
\quad
\Big(
\Longleftrightarrow
\mathbb{R}^n=\mbox{\rm Im}(\mathcal{C}(t_0,t))\,\Big).
$$
\end{theorem}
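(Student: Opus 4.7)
The plan is to derive Theorem \ref{teorema3.4} as an immediate corollary of Theorem \ref{teorema3.2} combined with Theorem \ref{teorema3.3}. Since the hypotheses include that the compatibility relations $(\ref{aaa4})$ hold, Theorem \ref{teorema3.3} supplies the identity $\mathcal{V}(t_0,t)=\mbox{\rm Im}(\mathcal{C}(t_0,t))$ whenever the multitimes $t_0,t\in D$ satisfy $t_0^{\alpha}\leq t^{\alpha}$ for all $\alpha$ or the reverse inequality. Each clause of the present theorem will then follow by substituting $\mbox{\rm Im}(\mathcal{C}(t_0,t))$ for $\mathcal{V}(t_0,t)$ in the corresponding clause of Theorem \ref{teorema3.2}.

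Concretely, for part $i)$ I would invoke Theorem \ref{teorema3.2} $iii)$, which says $(t_0,x_0)$ transfers to $(t,y)$ iff $x_0-\chi(t_0,t)y\in \mathcal{V}(t_0,t)$, equivalently $y-\chi(t,t_0)x_0\in \mathcal{V}(t,t_0)$. The componentwise comparison assumed in the hypothesis (in either direction) is exactly what is required to apply Theorem \ref{teorema3.3} to both $\mathcal{V}(t_0,t)$ and $\mathcal{V}(t,t_0)$, producing the two equivalent gramian conditions. For parts $ii)$ and $iii)$, the definitions of controllable and reachable already restrict attention to multitimes with the componentwise ordering needed by Theorem \ref{teorema3.3}, so the substitution into Theorem \ref{teorema3.2} $iv)$ and $v)$ is immediate.

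For parts $iv)$ and $v)$, Theorem \ref{teorema3.2} $vi)$-$vii)$ characterizes complete controllability (resp. reachability) from $t_0$ to $t$ by $\mathcal{V}(t_0,t)=\mathbb{R}^n$ (resp. $\mathcal{V}(t,t_0)=\mathbb{R}^n$). Under the hypothesis $t_0^{\alpha}<t^{\alpha}$ for every $\alpha$, Theorem \ref{teorema3.3} converts these into $\mbox{\rm Im}(\mathcal{C}(t_0,t))=\mathbb{R}^n$, which is the rank condition $\mbox{\rm rank}\,\mathcal{C}(t_0,t)=n$. The fact that the same rank condition governs both clauses is consistent with the identity $\chi(t,t_0)\mathcal{C}(t_0,t)\chi(t,t_0)^{\top}=-\mathcal{C}(t,t_0)$ noted just before Definition \ref{definitia3.4} (which, together with the invertibility of $\chi$, ensures that $\mathcal{C}(t_0,t)$ and $\mathcal{C}(t,t_0)$ share the same rank), and with Theorem \ref{teorema3.2} $viii)$, which equates complete controllability and complete reachability over the same pair $(t_0,t)$.

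There is no genuine obstacle here: the entire argument is a chain of substitutions. The only point requiring a moment of attention is to check that, in each clause, the ordering hypotheses on $t_0$ and $t$ are strong enough to trigger Theorem \ref{teorema3.3}; an inspection of the statements shows that they are, since every clause either imposes the componentwise comparison explicitly or quantifies only over multitimes satisfying it.
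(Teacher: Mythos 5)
Your proposal is correct and follows exactly the route the paper takes: the paper presents Theorem \ref{teorema3.4} as an immediate consequence of Theorem \ref{teorema3.2} once Theorem \ref{teorema3.3} is used to replace $\mathcal{V}(t_0,t)$ by $\mbox{\rm Im}(\mathcal{C}(t_0,t))$ under the componentwise ordering guaranteed by the hypotheses and by the conditions (\ref{aaa4}). Your additional check that every clause supplies the ordering needed to invoke Theorem \ref{teorema3.3}, and the remark on equal ranks of $\mathcal{C}(t_0,t)$ and $\mathcal{C}(t,t_0)$, match the paper's (very brief) justification.
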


\section{Controlled autonomous linear PDE \\system of first order}

A very special case is those of controlled autonomous
linear PDE system of first order, when
the matrix functions $M_{\alpha}$, $N_{\alpha}$ are
constants. Then the relation (\ref{aa6}) becomes
\begin{equation*}
M_{\alpha}M_{\beta}=M_{\beta}M_{\alpha},
\end{equation*}
the relation (\ref{aaa2}) reduces to
\begin{equation*}
M_{\alpha}N_{\beta}u_{\beta}
+
N_{\alpha}
\frac{\partial u_{\alpha}}{\partial s^{\beta}}
=M_{\beta}N_{\alpha}u_{\alpha}
+
N_{\beta}
\frac{\partial u_{\beta}}{\partial s^{\alpha}}
\end{equation*}
and the relation (\ref{aaa4}) can be written as
\begin{equation*}
M_{\alpha}N_{\beta}N^\top_{\beta}
+
N_{\beta}N^\top_{\beta}M^\top_{\alpha}
=
M_{\beta}N_{\alpha}N^\top_{\alpha}
+
N_{\alpha}N^\top_{\alpha}M^\top_{\beta}.
\end{equation*}

On the other hand, we have
\begin{equation*}
\chi(t,t_0)
=
e^{
M_{\alpha}(t^{\alpha}-t^{\alpha}_0)
}.
\end{equation*}
In this case, the fundamental matrix function $\chi(t,t_0)$
is defined for any $(t,t_0) \in \mathbb{R}^m \times \mathbb{R}^m$,
and not only for $(t,t_0) \in D \times D$. Moreover, $\chi(t,t_0)=\chi(t-t_0,0)$.

The controllability gramian matrix becomes
\begin{equation*}
\mathcal{C}(t_0,t)=
\int\limits_{\gamma_{t_0,t}}^{}
e^{
-
M_{\beta}
(s^{\beta}-t^{\beta}_0)
}
N_{\alpha}
N_{\alpha}^\top
\Big(
e^{
-
M_{\beta}
(s^{\beta}-t^{\beta}_0)
}
\Big )
^\top
\mbox{d}s^{\alpha}.
\end{equation*}
Since the relations (\ref{aaa4}) are verified on $\mathbb{R}^m$,
from the Proposition \ref{propozitia3.1}
it follows that the curvilinear integral is path independent on
$\mathbb{R}^m$. Hence the controllability gramian  $\mathcal{C}(t_0,t)$
is defined on $\mathbb{R}^m \times \mathbb{R}^m$ and also $\displaystyle\mathcal{C}(t_0,t)=\mathcal{C}(0,t-t_0)$.

In fact, if the matrix functions $M_{\alpha}$ and $N_{\alpha}$ are constant
$\forall \alpha$, one can take $D=\mathbb{R}^m$.

\begin{definition}
\label{definitia4.1} {\it For $\alpha=\overline{1,m}$,  let us consider
the constant matrices $M_{\alpha}\in \mathcal{M}_n(\mathbb{R})$,
$N_{\alpha}\in \mathcal{M}_{n,k}(\mathbb{R})$, such that}
\begin{equation*}
M_{\alpha}M_{\beta}
=
M_{\beta}M_{\alpha},
\qquad
\forall
\alpha, \beta=\overline{1,m}.
\end{equation*}
{\it For each $\alpha=\overline{1,m}$, we define the matrix
\begin{equation*}
G_{\alpha}= \Big( N_{\alpha} \,\,\, M_1N_{\alpha} \,\,\,
M_2N_{\alpha}\,\,\, \dots
\,\,\, M_mN_{\alpha}\,\,\, \dots \,\,\,
M_1^{k_{1}} \cdot M_2^{k_{2}}
\cdot \ldots \cdot M_m^{k_{m}} \cdot
N_{\alpha} \,\,\, \dots \,\,\, \Big)
\end{equation*}
made from all block matrices of the form
$$
M_1^{k_{1}} \cdot M_2^{k_{2}} \cdot \ldots
\cdot M_m^{k_{m}}\cdot
N_{\alpha}
$$
with \, $0\leq k_{1};k_{2}; \ldots;k_{m}\leq{n-1}$.
Further, we need to specify the order in which one ranges the block matrices
$M_1^{k_{1}} \cdot M_2^{k_{2}} \cdot \ldots
\cdot M_m^{k_{m}}\cdot
N_{\alpha}$ in the matrix $G_{\alpha}$.
In this way, the matrix $G_{\alpha}$ will be well defined}.

{\it On the set
\begin{equation*}
\Big\{
(k_1;k_2; \dots ; k_m)
\in
\mathbb{N}^m
\Bigm |
0\leq k_{1};k_{2}; \ldots;k_{m}\leq{n-1}
\Big\},
\end{equation*}
we define the following order relation, denoted by $\preceq$}:
\begin{equation*}
(k_1;k_2; \dots ; k_m)
\preceq
(q_1;q_2; \dots ; q_m)
\quad \mbox { {\it if} }
\end{equation*}
\begin{equation*}
k_1+k_2+ \dots + k_m
<
q_1+q_2+ \dots + q_m
\end{equation*}
\begin{equation*}
\quad \mbox { {\it or} }
\end{equation*}
\begin{equation*}
k_1+k_2+ \dots + k_m
=
q_1+q_2+ \dots + q_m
\quad \mbox { {\it and} }
\end{equation*}
{\it $k_1>q_1$ or $\exists j=\overline{2,m}$
such that $k_1=q_1$,
$k_2=q_2$, $\dots$, $k_{j-1}=q_{j-1}$, $k_{j}>q_{j}$ }
\begin{equation*}
\quad \mbox { {\it or} }
\end{equation*}
\begin{equation*}
(k_1;k_2; \dots ; k_m)
=
(q_1;q_2; \dots ; q_m).
\end{equation*}
{\it One verifies quickly that $\preceq$ is an order relation. The
block matrices}
$$
M_1^{k_{1}} \cdot M_2^{k_{2}} \cdots M_m^{k_{m}}\cdot
N_{\alpha}
$$
{\it are ranged in $G_{\alpha}$ in increasing order of
$(k_1;k_2; \dots ; k_m)$, relative to the order relation $\preceq$. This
means in fact that the block matrices are written in the increasing order of the sum
$k_1+k_2+ \dots + k_m$; in case that two such sums are equal,
the block matrices are written in lexicographic decreasing order of
$(k_1;k_2; \dots ; k_m)$.

The matrix
\begin{equation*}
G:= \Big( \,\, G_1 \quad G_2 \quad
\dots \quad G_m \,\, \Big).
\end{equation*}
is called the controllability matrix of the PDE system} $(\ref{aaa1})$.
\end{definition}

\begin{theorem}
\label{teorema4.1} {\it Let $t_0,t \in \mathbb{R}^m$
such that
$t^{\alpha}_0<t^{\alpha}$,
$\forall \alpha=\overline{1,m}$. Then}
\begin{equation*}
\mbox{\rm rank }\mathcal{C}(t,t_0)=
\mbox{\rm rank }\mathcal{C}(t_0,t)=\mbox{\rm rank }G,
\end{equation*}
{\it relation equivalent to}
\begin{equation*}
\mbox{\rm Im }\mathcal{C}(t_0,t)=\mbox{\rm Im }G,
\end{equation*}
{\it or}
\begin{equation*}
\mbox{\rm Ker} (\mathcal{C}(t_0,t)^\top)=
\mbox{\rm Ker}  (G^\top).
\end{equation*}
\end{theorem}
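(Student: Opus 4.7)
The plan is to reduce the statement to the kernel identity $\text{Ker}(\mathcal{C}(t_0,t)) = \text{Ker}(G^\top)$. Each integrand $\chi(t_0,s)N_\alpha N_\alpha^\top \chi(t_0,s)^\top$ is symmetric, so $\mathcal{C}(t_0,t)$ is symmetric and $\text{Ker}(\mathcal{C}(t_0,t)^\top) = \text{Ker}(\mathcal{C}(t_0,t))$; the stated rank, image and kernel equalities involving $\mathcal{C}(t_0,t)$ are then all equivalent by rank-nullity. The remaining equality $\text{rank}\,\mathcal{C}(t,t_0)=\text{rank}\,\mathcal{C}(t_0,t)$ is immediate from the identity $\chi(t,t_0)\mathcal{C}(t_0,t)\chi(t,t_0)^\top=-\mathcal{C}(t,t_0)$ recorded after Definition \ref{definitia3.3}, together with the invertibility of $\chi(t,t_0)$. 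So it suffices to prove the two kernel inclusions.

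For the inclusion $\text{Ker}(G^\top)\subseteq\text{Ker}(\mathcal{C}(t_0,t))$, I would suppose $b^\top M_1^{k_1}\cdots M_m^{k_m}N_\alpha = 0$ whenever $0\le k_\beta\le n-1$ and $\alpha\in\{1,\ldots,m\}$. Cayley-Hamilton reduces every $M_\beta^{n+r}$ to a polynomial of degree $\le n-1$ in $M_\beta$, and commutativity of the $M_\beta$ then shows that every monomial $M_1^{k_1}\cdots M_m^{k_m}$ is a linear combination of monomials with $0\le k_\beta\le n-1$. So the vanishing extends to every multi-index. Substituting the power-series expansion of the commuting exponential $\chi(t_0,s) = \prod_\beta e^{-M_\beta(s^\beta-t_0^\beta)}$ gives $b^\top\chi(t_0,s)N_\alpha\equiv 0$ in $s$, hence $b^\top\mathcal{C}(t_0,t)=0$.

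For the reverse inclusion, which is the main obstacle, I start with $\mathcal{C}(t_0,t)b=0$, so $b^\top\mathcal{C}(t_0,t)b=0$. Exploiting path independence, I evaluate this scalar along piecewise-linear, monotonic, axis-aligned paths from $t_0$ to $t$. Along a segment on which only $s^\alpha$ varies (with $ds^\alpha\ge 0$), only the $\alpha$-th term contributes and the integrand reduces to $\|N_\alpha^\top\chi(t_0,s)^\top b\|^2\,ds^\alpha\ge 0$; vanishing of the total sum forces $N_\alpha^\top\chi(t_0,s)^\top b=0$ at every point of every such segment. The key is to vary the path to cover enough of $\mathbb{R}^m$. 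For an arbitrary $s$ in the open box $(t_0^1,t^1)\times\cdots\times(t_0^m,t^m)$ and arbitrary $\alpha$, I construct a monotonic axis-aligned path from $t_0$ to $t$ whose $s^\alpha$-sweep passes through $s$: raise each $s^\beta$ with $\beta\ne\alpha$ from $t_0^\beta$ to $s^\beta$ (one coordinate per segment) while keeping $s^\alpha=t_0^\alpha$; then sweep $s^\alpha$ from $t_0^\alpha$ up to $t^\alpha$, which traverses $s$; then raise the remaining coordinates from $s^\beta$ to $t^\beta$.

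Applied to this path, the previous step gives $b^\top\chi(t_0,s)N_\alpha=0$ at this $s$ and this $\alpha$. Letting $s$ and $\alpha$ range freely, $b^\top\chi(t_0,s)N_\alpha$ vanishes on a nonempty open subset of $\mathbb{R}^m$ for every $\alpha$. Since $\chi(t_0,s)=e^{-\sum_\beta M_\beta(s^\beta-t_0^\beta)}$ is real-analytic in $s\in\mathbb{R}^m$, so is $b^\top\chi(t_0,s)N_\alpha$, and it must vanish identically on $\mathbb{R}^m$. Using commutativity, $\partial^{k_1}_{s^1}\cdots\partial^{k_m}_{s^m}\chi(t_0,s) = (-1)^{k_1+\cdots+k_m}M_1^{k_1}\cdots M_m^{k_m}\chi(t_0,s)$, which at $s=t_0$ equals $(-1)^{k_1+\cdots+k_m}M_1^{k_1}\cdots M_m^{k_m}$. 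Therefore $b^\top M_1^{k_1}\cdots M_m^{k_m}N_\alpha=0$ for every multi-index, in particular for $0\le k_\beta\le n-1$, so $b\in\text{Ker}(G^\top)$, completing the proof.
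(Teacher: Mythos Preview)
Your argument is correct and follows essentially the same strategy as the paper: reduce to the kernel identity, handle $\mathrm{Ker}(G^\top)\subseteq\mathrm{Ker}(\mathcal{C}(t_0,t)^\top)$ via Cayley--Hamilton and the exponential series, and for the reverse inclusion exploit path independence together with nonnegativity of the integrand to force $b^\top\chi(t_0,s)N_\alpha=0$ on the open box, then differentiate. The only cosmetic differences are that the paper uses the two straight-line segments $[t_0,s]\cup[s,t]$ (rather than your axis-aligned staircase) to capture an arbitrary interior point, and then invokes mere continuity to pass to $s=t_0$ where the derivatives are evaluated, avoiding your appeal to real-analyticity; both variants are valid and neither yields anything the other does not.
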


\begin{proof} We already have seen that
$\mbox{\rm rank }\mathcal{C}(t,t_0)= \mbox{\rm rank }\mathcal{C}(t_0,t)$.
Hence, it is enough to show the equality
$\displaystyle \mbox{\rm Ker} (\mathcal{C}(t_0,t)^\top)= \mbox{\rm Ker} (G^\top)$.

The inclusion $\displaystyle
\mbox{\rm Ker} (\mathcal{C}(t_0,t)^\top)
\subseteq
\mbox{\rm Ker}  (G^\top)$: let $b$
such that $(\mathcal{C}(t_0,t))^\top b=0$,
or $b^\top \mathcal{C}(t_0,t) =0$. Consequently we have
and $b^\top \mathcal{C}(t_0,t)b =0$.

Let $s$ such as $t^{\alpha}_0<s^{\alpha}<t^{\alpha}$, $\forall
\alpha=\overline{1,m}$, $s$ fixed but arbitrarily chosen. Instead of
the curve $\gamma_{t_0,t}$ we select the union of the segments
$[t_0,s]$ and $[s,t]$, covered from $t_0$ to $s$, respectively from
$s$ to $t$. Let us parameterize these segments by
$$
\lambda_1(\tau)
=
(1-\tau)t_0+\tau s,
\quad \forall \tau \in [0,1],
$$
$$
\lambda_2(\tau)
=
(1-\tau)s+\tau t,
\quad \forall \tau \in [0,1].
$$
Obviously $\gamma_{t_0,t}$ is a piecewise $\mathcal{C}^1$ curve.
From $b^\top \mathcal{C}(t_0,t)b =0$, we find
\begin{equation*}
\begin{split}
\int\limits_{0}^{1}
b^\top
e^{
\displaystyle
-M_{\beta}
(\lambda_1^{\beta}(\tau)-t^{\beta}_0)
}
N_{\alpha}
N_{\alpha}^\top
\Big(
e^{
\displaystyle
-M_{\beta}
(\lambda_1^{\beta}(\tau)-t^{\beta}_0)
}
\Big ) ^\top
b\,
(s^{\alpha}-t_0^{\alpha})
\,\mbox{d}\tau
\end{split}
\end{equation*}
\begin{equation*}
\begin{split}
+
\int\limits_{0}^{1}
b^\top
e^{
\displaystyle
-M_{\beta}
(\lambda_2^{\beta}(\tau)-t^{\beta}_0)
}
N_{\alpha}
N_{\alpha}^\top
\Big(
e^{
\displaystyle
-M_{\beta}
(\lambda_2^{\beta}(\tau)-t^{\beta}_0)
}
\Big ) ^\top
b\,
(t^{\alpha}-s^{\alpha})
\,\mbox{d}\tau = 0;
\end{split}
\end{equation*}
\begin{equation*}
\int\limits_{0}^{1}
\left(
\Big \|
b^\top
e^{
\displaystyle
-M_{\beta}
(\lambda_1^{\beta}(\tau)-t^{\beta}_0)
}
N_{\alpha}
\Big \| ^2
(s^{\alpha}-t_0^{\alpha})
\right.
\end{equation*}
\begin{equation*}
+
\left.
\Big \|
b^\top
e^{
\displaystyle
-M_{\beta}
(\lambda_2^{\beta}(\tau)-t^{\beta}_0)
}
N_{\alpha}
\Big \| ^2
(t^{\alpha}-s^{\alpha})
\right)
\mbox{d}\tau
=0.
\end{equation*}
Since the integrand is positive, it follows
\begin{equation*}
\sum_{\alpha=1}^m
\Big \|
b^\top
e^{
\displaystyle
-M_{\beta}
(\lambda_1^{\beta}(\tau)-t^{\beta}_0)
}
N_{\alpha}
\Big \| ^2
(s^{\alpha}-t_0^{\alpha})
+
\end{equation*}
\begin{equation*}
+
\sum_{\alpha=1}^m
\Big \|
b^\top
e^{
\displaystyle
-M_{\beta}
(\lambda_2^{\beta}(\tau)-t^{\beta}_0)
}
N_{\alpha}
\Big \| ^2
(t^{\alpha}-s^{\alpha})
=0, \,\,
\forall \tau \in [0;1].
\end{equation*}
But $s^{\alpha}-t_0^{\alpha}>0$ \, and $t^{\alpha}-s^{\alpha}>0$, hence
\begin{equation*}
b^\top
e^{
\displaystyle
-M_{\beta}
(\lambda_1^{\beta}(\tau)-t^{\beta}_0)
}
N_{\alpha}
=
0, \,\,
\forall \tau \in [0;1], \,\,
\forall \alpha=\overline{1,m}
\end{equation*}
and
\begin{equation*}
b^\top
e^{
\displaystyle
-M_{\beta}
(\lambda_2^{\beta}(\tau)-t^{\beta}_0)
}
N_{\alpha}
=0, \,\,
\forall \tau \in [0;1], \,\,
\forall \alpha=\overline{1,m}.
\end{equation*}
We set $\tau=1$, hence $\lambda_1^{\beta}(1)=s^{\beta}$ and one obtains
\begin{equation*}
b^\top
e^{
\displaystyle
-M_{\beta}
(s^{\beta}-t^{\beta}_0)
}
N_{\alpha}
=0, \,\,
\forall \alpha=\overline{1,m};
\end{equation*}
valid equalities
$\forall\, s^{\beta}\in (t^{\beta}_0;t^{\beta})$,
\, $\beta=\overline{1,m}$. But, taking into account the continuity, it follows
\begin{equation}
\label{aaaa1}
b^\top
e^{
\displaystyle
-M_{\beta}
(s^{\beta}-t^{\beta}_0)
}
N_{\alpha}
=0, \,\,
\forall \alpha=\overline{1,m},
\end{equation}
\begin{equation*}
\mbox{and}\,\,\forall \, s
\mbox{ such that }
t^{\beta}_0
\leq
s^{\beta}
<
t^{\beta}, \,\, \forall \beta.
\end{equation*}
Let\, $0\leq k_1,k_2, \ldots , k_m \leq n-1$.
In (\ref{aaaa1}) we differentiate with respect to
$s^1$ of $k_1$ times, with respect to
$s^2$ of $k_2$ times, $\dots$, with respect to
$s^m$ of $k_m$ times (we take also into account the relation (\ref{aa6})):
\begin{equation*}
(-1)^{k_1+k_2+\ldots +k_m}
b^\top
e^{
\displaystyle
-M_{\beta}
(s^{\beta}-t^{\beta}_0)
}
M_1^{k_1}M_2^{k_2}
\cdot
\ldots
\cdot
M_m^{k_m}
N_{\alpha}
=0, \,\,
\end{equation*}
while for $s=t_0$ we have
\begin{equation*}
b^\top
M_1^{k_1}M_2^{k_2}
\cdots
M_m^{k_m}
N_{\alpha}
=0, \,\,
\end{equation*}
hence also $\displaystyle b^\top G=0$ or
$\displaystyle G^\top b=0$, i.e., $b\in \mbox{\rm Ker} (G^\top)$.

The inclusion $\displaystyle
\mbox{\rm Ker} (G^\top)
\subseteq
\mbox{\rm Ker}(\mathcal{C}(t_0,t)^\top)$: let $b$ such that
$\displaystyle G^\top b=0$, or
$\displaystyle b^\top G=0$. It follows that
\begin{equation*}
b^\top
M_1^{k_1}M_2^{k_2}
\cdots
M_m^{k_m}
N_{\alpha}
=0,
\quad
\forall
\,0\leq k_1,k_2, \ldots , k_m \leq n-1.
\end{equation*}
From the Hamilton-Cayley Theorem, and taking into account the relations
$M_{\alpha}M_{\beta}=M_{\beta}M_{\alpha}$,
we deduce
\begin{equation*}
b^\top
M_1^{k_1}M_2^{k_2}
\cdots
M_m^{k_m}
N_{\alpha}
=0,
\quad
\forall\,
k_1,k_2, \ldots , k_m \geq0,
\end{equation*}
\begin{equation*}
b^\top
(t^{1}_0-s^1)^{k_1}
M_1^{k_1}
(t^{2}_0-s^2)^{k_2}
M_2^{k_2}
\cdots
(t^{m}_0-s^m)^{k_m}
M_m^{k_m}
N_{\alpha}
=0,
\end{equation*}
valid equalities
$\forall\,
k_1,k_2, \ldots , k_m \geq0$. Hence we have $\forall p\geq0$, $\forall \alpha=\overline{1,m}$,
\begin{equation*}
b^\top
\Big(
\sum_{\beta=1}^m
M_{\beta}
(t^{\beta}_0-s^{\beta})
\Big)^p
N_{\alpha}
=0.
\end{equation*}
\begin{equation*}
b^\top
e^{
\displaystyle
M_{\beta}
(t^{\beta}_0-s^{\beta})
}
N_{\alpha}
=
\sum_{p=0}^{\infty}
\frac{1}{p\,!}
b^\top
\Big(
M_{\beta}
(t^{\beta}_0-s^{\beta})
\Big)^p
N_{\alpha}
=0.
\end{equation*}
It follows that $b^\top C(t_0,t)=0$
or $(\mathcal{C}(t_0,t))^\top b=0$, hence
$b\in \mbox{\rm Ker}(\mathcal{C}(t_0,t)^\top)$.
\end{proof}

\begin{remark} From the proof of the Theorem $\ref{teorema4.1}$,
we see that the inclusion
$$\displaystyle
\mbox{\rm Ker} (G^\top)
\subseteq
\mbox{\rm Ker}(\mathcal{C}(t_0,t)^\top),
\,\,
\mbox { i.e.}, \,\,
\mbox{\rm Im } \mathcal{C}(t_0,t)
\subseteq
\mbox{\rm Im } G,
$$
is true for any $t_0,t \in \mathbb{R}^m$, hence we have also
$$
\mbox{\rm rank} \,\mathcal{C}(t_0,t)\leq \mbox{\rm rank}\, G,
\qquad
\forall
t_0,t \in \mathbb{R}^m.
$$
\end{remark}

\begin{example} There exist linear autonomous PDE systems for which we have $t_0,t\in \mathbb{R}^m$,
$t_0\neq t$, $t_0^{\alpha} \leq t^{\alpha}$,
$\forall \alpha$, such that
$\mbox{\rm rank }\displaystyle \mathcal{C}(t_0;t) < \mbox{\rm rank }G$. Indeed,
let us take
$$
m=2,\quad n=2, \quad k=1, \quad D=\mathbb{R}^2
$$
$$\displaystyle
N_1=\left(
       \begin{array}{c}
         1 \\
         0 \\
       \end{array}
     \right)
,\quad
N_2=\left(
       \begin{array}{c}
         0 \\
         1 \\
       \end{array}
     \right)
,\quad
M_1=
\left(
  \begin{array}{cc}
    1 & 0 \\
    0 & 0 \\
  \end{array}
\right)
,\quad
M_2=0.
$$
\end{example}
These verify $M_1M_2=M_2M_1=0$ and $M_1N_1=N_1,\,\, M_1N_2=0.$

Control space: $u=(u_1,u_2)$ is a control if and only if
$$
M_1N_2u_2+N_1\frac{\partial u_1}{\partial t_2} = M_2N_1u_1+N_2\frac{\partial u_2}{\partial t_1};
$$
the relations $M_1N_2=0$, $M_2=0$  transforms the foregoing PDE in
$$
N_1\frac{\partial u_1}{\partial t_2} = N_2\frac{\partial u_2}{\partial t_1}
$$
equivalent to
$$
\frac{\partial u_1}{\partial t_2}
=0;
\quad \quad
\frac{\partial u_2}{\partial t_1}
=0.
$$

Consequently $u=(u_1,u_2)$ is a control if and only if there exist $f_1,f_2:\mathbb{R}\to \mathbb{R}$, of
class $\mathcal{C}^1$ such that
$$
u_1(t^1,t^2)=f_1(t^1),
\quad \quad
u_2(t^1,t^2)=f_2(t^2),
\quad \quad
\forall (t^1,t^2)\in \mathbb{R}^2.
$$

The relation
$$
M_1N_2N_2^\top+N_2N_2^\top M_1^\top = M_2N_1N_1^\top+N_1N_1^\top M_2^\top
$$
(see the condition $(\ref{aaa4})$) is obvious since $M_1N_2=0$ and $M_2=0$.

The rank of the matrix
$$
G=
\Big(\,\,
N_1 \quad M_1N_1 \quad M_2N_1 \quad M_1M_2N_1
\quad
N_2 \quad M_1N_2 \quad M_2N_2 \quad M_1M_2N_2
\,\,
\Big)
$$
$$
=
\left(
  \begin{array}{cccccccc}
    1 & 1 & 0 & 0 & 0 & 0 & 0 & 0 \\
    0 & 0 & 0 & 0 & 1 & 0 & 0 & 0 \\
  \end{array}
\right)
$$
is $2$.

We compute the matrix $\mathcal{C}(t_0,t)$, with
$t_0=0=(0,0)$, $t=(t^1,0)$, ($t^1\neq 0$), i.e.,
$t_0^2=t^2=0$. For that, we select $\gamma (\tau)=(\tau,0)$,
$\tau \in [0,t^1]$; $\gamma$ is a curve joining the two-time $(0,0)$ with $t=(t^1,0)$. Then
$$
\mathcal{C}((0,0);(t^1,0))
=
\displaystyle
\int\limits_{0}^{t^1}
\displaystyle
e^
{\displaystyle
-\tau M_1
}\cdot
N_1N_1^\top
\cdot
\left(
e^
{\displaystyle
-\tau M_1
}
\right)
^\top
\mbox{d} \tau.
$$
But
$$
e^
{\displaystyle
-\tau M_1}
=
\left(
  \begin{array}{cc}
    e^
{\displaystyle
-\tau} & 0 \\
    0 & 0 \\
  \end{array}
\right),
\quad \quad
e^
{\displaystyle
-\tau M_1
}\cdot
N_1
=
\left(
  \begin{array}{c}
     e^{\displaystyle -\tau} \\
    0 \\
  \end{array}
\right);
$$
$$
\mathcal{C}((0,0);(t^1,0))
=
\displaystyle
\int\limits_{0}^{t^1}
\displaystyle
\left(
  \begin{array}{c}
     e^{\displaystyle -\tau} \\
    0 \\
  \end{array}
\right)
\left(
  \begin{array}{cc}
    e^{\displaystyle -\tau} & 0 \\
  \end{array}
\right)
\mbox{d} \tau
=
\displaystyle
\int\limits_{0}^{t^1}
\displaystyle
\left(
  \begin{array}{cc}
    e^{\displaystyle -2\tau} & 0 \\
    0 & 0 \\
  \end{array}
\right )
\mbox{d} \tau;
$$
$$
\mathcal{C}((0,0);(t^1,0))
=
\displaystyle
\left(
  \begin{array}{cc}
     \frac{ \displaystyle 1- e^{-2t^1}}{ 2} & 0 \\
    0 & 0 \\
  \end{array}
\right).
$$
Consequently the rank of the matrix $\mathcal{C}((0,0);(t^1,0))$ is $1$,
strictly smaller than the rank of $G$.

The Theorem \ref{teorema3.4} can be rewritten as

\begin{theorem}
\label{teorema4.2} {\it For
$\alpha=\overline{1,m}$,  let us consider the constant matrices
$M_{\alpha}\in \mathcal{M}_n(\mathbb{R})$,
$N_{\alpha}\in \mathcal{M}_{n,k}(\mathbb{R})$,
such that }
\begin{equation*}
M_{\alpha}M_{\beta}
=
M_{\beta}M_{\alpha},
\qquad
\forall
\alpha, \beta=\overline{1,m},
\end{equation*}
\begin{equation*}
M_{\alpha}N_{\beta}N^\top_{\beta}
+
N_{\beta}N^\top_{\beta}M^\top_{\alpha}
=
M_{\beta}N_{\alpha}N^\top_{\alpha}
+
N_{\alpha}N^\top_{\alpha}M^\top_{\beta},
\qquad
\forall
\alpha, \beta=\overline{1,m}.
\end{equation*}

{\it We consider the autonomous PDE system}

\begin{equation*}
\frac{\partial x}{\partial t^{\alpha}}
=
M_{\alpha}x+N_{\alpha}u_{\alpha}(t),
\quad
\forall \alpha=\overline{1,m},
\end{equation*}
{\it where $u=(u_{\alpha})_{\alpha=\overline{1,m}}$
is a control, i.e.,
$u_{\alpha}:\mathbb{R}^m \to \mathbb{R}^k
=
\mathcal{M}_{k,1}(\mathbb{R})$ is of class $\mathcal{C}^1$, $\forall\alpha$, and}
\begin{equation*}
M_{\alpha}N_{\beta}u_{\beta}(t)
+
N_{\alpha}
\frac{\partial u_{\alpha}}{\partial s^{\beta}}(t)
=M_{\beta}N_{\alpha}u_{\alpha}(t)
+
N_{\beta}
\frac{\partial u_{\beta}}{\partial s^{\alpha}}(t),
\,\,\,
\forall t \in
\mathbb{R}^m,
\,\,
\forall
\alpha,\beta
=\overline{1,m}.
\end{equation*}

{\it Let $G$ be the controllability matrix of this PDE system}.

{\it $i)$ If $t^{\alpha}\,>\,(or <)\,t_0^{\alpha}$, $\forall \alpha$,
then the phase $(t_0,x_0)$ transfers to the phase $(t,y)$ if and only if }
$$
x_0-
\displaystyle e^{\displaystyle
M_{\alpha}(t^{\alpha}_0-t^{\alpha})
}
y\in \mbox{\rm Im}(G),
$$
{\it equivalent to }
$$
y-\
\displaystyle e^{\displaystyle
M_{\alpha}(t^{\alpha}-t_0^{\alpha})}
x_0 \in \mbox{\rm Im}(G).
$$

$ii)$ {\it The phase $(t_0,x_0)$ is controllable if and only if }
$
x_0\in \mbox{\rm Im}(G).
$\\
{\it One observes that if exists a multitime $t_0$
such that the phase $(t_0,x_0)$ is controllable, then for any multitime $t$,
the phases $(t,x_0)$ are controllable.

$iii)$ The phase $(t,y)$ is reachable if and only if }
$
y\in \mbox{\rm Im}(G).
$\\
{\it One observes that if there exists a multitime $t$ such that the phase $(t,y)$ is
reachable, then for any multitime $s$, the phases $(s,y)$ are reachable}.

$iv)$ {\it If the phase $(t_0,x_0)$ is controllable (or reachable),
then for any multitime $t$, the phases $(t,x_0)$ are controllable and reachable}.

$v)$ {\it Let $t_0,t \in D$, with $t_0^{\alpha}<t^{\alpha}$, $\forall \alpha$.
The PDE system is completely controllable from
$t_0$ to $t$ if and only if}
$
\mbox{\rm rank }G=n.
$

$vi)$ {\it Let $t_0,t \in D$, with $t_0^{\alpha}<t^{\alpha}$, $\forall \alpha$.
The PDE system is completely reachable from $t_0$ to $t$ if and only if}
$
\mbox{\rm rank }G=n.
$

$vii)$ {\it If there exist $t_0,t \in D$, with $t_0^{\alpha}<t^{\alpha}$, $\forall \alpha$ and
if the PDE system is completely controllable (or completely reachable) from $t_0$ to $t$, then
the PDE system is completely controllable and completely reachable (equivalent to}
$\mbox{\rm rank }G=n $).
\end{theorem}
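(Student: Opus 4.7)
The plan is to reduce each assertion of Theorem 4.2 to the corresponding item in Theorem 3.4 by means of two substitutions available in the autonomous setting. First, the fundamental matrix is the matrix exponential
\begin{equation*}
\chi(t,t_0)=\exp\Bigl(\sum_{\alpha}M_{\alpha}(t^{\alpha}-t_0^{\alpha})\Bigr);
\end{equation*}
second, Theorem 4.1 identifies $\mathrm{Im}\,\mathcal{C}(t_0,t)=\mathrm{Im}\,G$ whenever $t_0^{\alpha}<t^{\alpha}$ for all $\alpha$. Before invoking Theorem 3.4, I would verify the hypotheses on $D=\mathbb{R}^m$: with the $M_{\alpha}$ constant, condition (\ref{aa6}) collapses to the assumed $M_{\alpha}M_{\beta}=M_{\beta}M_{\alpha}$, while with the $N_{\alpha}$ constant, condition (\ref{aaa4}) collapses to the assumed symmetry relation for $N_{\alpha}N_{\alpha}^{\top}$.

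Parts (i), (ii), (iii), (v), (vi) then follow as direct translations: each is just its counterpart in Theorem 3.4 after rewriting $\chi(t_0,t)$ as an exponential and replacing $\mathrm{Im}\,\mathcal{C}(\cdot,\cdot)$ by $\mathrm{Im}\,G$. For (v) and (vi), the strict inequalities $t_0^{\alpha}<t^{\alpha}$ are exactly what put us in the equality case of Theorem 4.1; the Remark after Theorem 4.1 yields only the inclusion $\mathrm{Im}\,\mathcal{C}(t_0,t)\subseteq\mathrm{Im}\,G$ for unordered pairs, so this ordering hypothesis is genuinely needed.

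Parts (iv) and (vii) exploit the decisive feature of the autonomous case, namely that $\mathrm{Im}\,G$ does not depend on the multitime parameters. For (iv), if $(t_0,x_0)$ is controllable then (ii) gives $x_0\in\mathrm{Im}\,G$; given an arbitrary multitime $t$, one chooses auxiliary multitimes $t',t''\in\mathbb{R}^m$ with $t^{\alpha}<t'^{\alpha}$ and $t''^{\alpha}<t^{\alpha}$ for every $\alpha$, and these always exist in $\mathbb{R}^m$, so Theorem 4.1 gives $x_0\in\mathrm{Im}\,G=\mathrm{Im}\,\mathcal{C}(t,t')=\mathrm{Im}\,\mathcal{C}(t'',t)$, yielding both controllability and reachability of $(t,x_0)$; the reachable case is symmetric. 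For (vii), the rank condition $\mathrm{rank}\,G=n$ is intrinsic to $(M_{\alpha},N_{\alpha})$ and independent of $(t_0,t)$, so its validity for any one admissible pair forces it for every admissible pair.

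The main (and essentially only) subtlety is confirming that Theorem 4.1's equality is available wherever it is invoked, which reduces to the observation that $\mathbb{R}^m$ admits multitimes strictly above and strictly below any prescribed one; apart from this bookkeeping, the proof is a mechanical specialization of Theorems 3.4 and 4.1.
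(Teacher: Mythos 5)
Your proposal is correct and follows essentially the same route as the paper, which offers no separate argument for this result but simply introduces it with the phrase that Theorem 3.4 ``can be rewritten as'' Theorem 4.2 in the autonomous case: the specialization of conditions (2.8) and (3.4) to the constant-matrix hypotheses, the identification of $\chi$ with the matrix exponential, and the substitution $\mbox{\rm Im}\,\mathcal{C}(t_0,t)=\mbox{\rm Im}\,G$ from Theorem 4.1 are exactly the intended steps. Your additional observations --- that parts (iv) and (vii) rest on the independence of $\mbox{\rm Im}\,G$ from the multitime parameters and on the fact that $\mathbb{R}^m$ always contains multitimes strictly above and strictly below any given one --- make explicit what the paper leaves implicit, and are welcome.
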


\begin{example} Let us give an example of multitime linear PDE system,
with constant matrices $M_{\alpha}$, $N_{\alpha}$, for which
we have $\mbox{\rm rank }G=n$,
but no state $(t_0,x_0)$, with $x_0\neq 0$, is controllable. Let us consider
$$
m=3;\,\,n=3;\,\,k=1;\,\, D=\mathbb{R}^3;
$$
\begin{equation*}
M_1=M_2=M_3
{\stackrel {\mathrm {not }}{=}}
M
=
\left(
  \begin{array}{ccc}
    0 & 0 & 1 \\
    1 & 0 & 0 \\
    0 & 1 & 0 \\
  \end{array}
\right)
\end{equation*}
\begin{equation*}
N_1=
\left(
  \begin{array}{c}
    1 \\
    0 \\
    0 \\
  \end{array}
\right),
\quad
N_2=
\left(
  \begin{array}{c}
    0 \\
    1 \\
    0 \\
  \end{array}
\right),
\quad
N_3=
\left(
  \begin{array}{c}
    0 \\
    0 \\
    1 \\
  \end{array}
\right).
\end{equation*}
\end{example}
The foregoing matrices verify the relations
$$
MN_1=N_2, \,
MN_2=N_3, \,
MN_3=N_1
$$
and the relations $(\ref{aa6})$ are obvious.

Let us determine the control space: $u=(u_1,u_2,u_3)$ is a control
if and only if the relations $(\ref{aaa2})$ are true.
If in $(\ref{aaa2})$ we put $\alpha=2$, $\beta=3$, then
\begin{equation*}
M_2N_3u_3(t)+N_2\frac{\partial u_2 }{\partial t^3}(t)
=
M_3N_2u_2(t)+N_3\frac{\partial u_3 }{\partial t^2}(t),
\quad
\forall t\in \mathbb{R}^3,
\end{equation*}
or
\begin{equation*}
u_3(t)N_1+\frac{\partial u_2 }{\partial t^3}(t)N_2
=
u_2(t)N_3+\frac{\partial u_3 }{\partial t^2}(t)N_3,
\quad
\forall t\in \mathbb{R}^3.
\end{equation*}
Since $N_1$, $N_2$, $N_3$ are linearly independent, it follows that
$$
u_3(t)=0,\,\,u_2(t)+\frac{\partial u_3 }{\partial t^2}(t)=0,\,\,\forall t\in \mathbb{R}^3,
$$
hence $u_3(t)=u_2(t)=0$, $\forall t\in \mathbb{R}^3$.

If in $(\ref{aaa2})$ we take $\alpha=1$, $\beta=2$, then we get
\begin{equation*}
M_1N_2u_2(t)+N_1\frac{\partial u_1 }{\partial t^2}(t)
=
M_2N_1u_1(t)+N_2\frac{\partial u_2 }{\partial t^1}(t),
\quad
\forall t\in \mathbb{R}^3,
\end{equation*}
or
\begin{equation*}
N_1\frac{\partial u_1 }{\partial t^2}(t)
=
N_2u_1(t),
\quad
\forall t\in \mathbb{R}^3,
\end{equation*}
hence $u_1(t)=0$, $\forall t\in \mathbb{R}^3$. We have proved
that the control space is $\{0\}$, and consequently
$$
\mathcal{V}(t_0,t)=\{0\},
\quad
\forall (t_0,t)
\in
\mathbb{R}^3
\times
\mathbb{R}^3.
$$
These results and the Theorem \ref{teorema3.2}
show that no phase $(t_0,x_0)$, with $x_0\neq 0$,
is controllable or reachable.

We remark that the controllability matrix $G$ has the rank $3=n$. The conclusion
of the Theorem \ref{teorema4.2} is no longer valid. The reason is that
the conditions $(\ref{aaa4})$ are not true.

\section{Comments}

In this paper, the functions which define the PDE
systems (for example (\ref{aa1}), (\ref{aaa1}), etc.) are of class
$\mathcal{C}^1$ and satisfy the complete integrability conditions
(of type (\ref{aa2}), (\ref{aa6})+(\ref{aa7}), (\ref{aa6})+(\ref{aaa2})). Also,
the general relations (\ref{aa4}) are verified - we have linear
PDE systems. Hence, throughout,
the Cauchy problem $\{(\ref{aaa1}), \,\, x(t_0)=x_0\}$
has a unique solution, global defined, and it is of class
$\mathcal{C}^2$ (Theorems \ref{teorema2.1}, \ref{teorema2.2}, \ref{teorema2.3}).

Sometimes, in the papers \cite{12} -- \cite{23}, the functions which define
the PDE systems (for example, the controls) are
piecewise $\mathcal{C}^1$ functions; in this case,
the complete integrability conditions are piecewise satisfied.
Identically, the solutions will verify the PDEs in the piecewise
sense. Generally, the solutions are not continuous functions.

In the paper \cite{13} it is indicated a construction of the solution of a
Cauchy problem associated to a linear PDE system. It is similar (in
a certain sense) with those obtained in the Theorem \ref{teorema3.1}. But, in
this context, the Cauchy problem has not a unique solution. To
maintain this idea, we give the following example: let
$m=2$, $n=1$ (hence $x(\cdot)=x_1(\cdot)$), $k=1$,
$D=\mathbb{R}^2$,
$$
M_1(t)=M_2(t)=0,
\quad
N_1(t)=N_2(t)=1,
\quad
\forall
t \in \mathbb{R}^2;
$$
\begin{equation*}
u_1(t^1,t^2)
=
\begin{cases}
\displaystyle 1, \mbox{ if } t^1+t^2 \geq 1 \\
\displaystyle 0, \mbox{ if } t^1+t^2 < 1
\end{cases};
\quad
u_2(t^1,t^2)=0,
\forall
(t^1,t^2) \in \mathbb{R}^2.
\end{equation*}
Set $t_0=(0,0)$, $x_0=0$, i.e., $x(0,0)=0$ and we formulate the Cauchy problem
$$
\displaystyle
\frac{\partial x}{\partial t^1} = u_1(t^1,t^2) \vspace{0.2 cm},\,\, \displaystyle\frac{\partial x}{\partial t^2}=0,\,\, x(0,0)=0.
$$
This PDE system satisfies the piecewise complete integrability conditions
(\ref{aa6}) and (\ref{aaa2}) (this can be easily checked);
the conditions (\ref{aaa2}) are true there where the
control $u_1(\cdot)$ is of class $\mathcal{C}^1$,
i.e., on the non-connected set
$$
\mathbb{R}^2
\setminus
\{(t^1,t^2) | t^1+t^2 =1\}.
$$

Here, we have $\chi (t,s)=1$, $\forall (t,s)\in
\mathbb{R}^2 \times \mathbb{R}^2$. If
$t=(t^1,t^2)\in \mathbb{R}^2$ is an arbitrary fixed two-time, then
$$
x(t^1,t^2)=\chi(t,t_0)x_0 + \int\limits_{\gamma_{t_0,t}}^{}\chi(t,s)N_{\alpha}(s)u_{\alpha}(s)\mbox{d}s^{\alpha} = \int\limits_{\gamma_{t_0,t}}^{}u_{1}(s^1, s^2)\mbox{d}s^{1}.
$$

For $t^1+t^2<1$, we obtain obviously $x(t^1,t^2)=0$. It remains to study the case $t^1+t^2>1$.

Let $a\in \mathbb{R}$ be an arbitrary point. We consider the curve $\gamma_{t_0,t}$
consisting in two segments: the first is the segment
which joins the point $t_0=(0,0)$ to the point $(a,1-a)$, on the straight line $s^1+s^2=1$,
where $u_1(\cdot)$ is discontinuous;
the second segment joins the point $(a,1-a)$ to the point $(t^1,t^2)$. The first segment,
without the point $(a,1-a)$, is situated in the semiplane
$
\{(s^1,s^2) | s^1+s^2 <1\},
$
and the second, without the point $(a,1-a)$, is included in the semiplane
$
\{(s^1,s^2) | s^1+s^2 >1 \}.
$
A parametrization of the second segment is
$$
s^1 (\tau)
=(1-\tau) a + \tau t^1,
\,\,
s^2 (\tau)
=(1-\tau) (1-a) + \tau t^2,
\quad
\tau \in [0,1].
$$
Taking into account that $u_1(\cdot)$ vanishes on the first segment, we find
$$
x(t^1,t^2)=
\int\limits_{0}^{1}
1
\cdot
(t^1-a)
\mbox{d}\tau
=t^1-a.
$$
Consequently, the solution is given by the formula
\begin{equation*}
x(t^1,t^2)
=
\begin{cases}
\displaystyle t^1-a, \mbox{ if } t^1+t^2 > 1 \\
\displaystyle \quad 0 \quad , \mbox{ if } t^1+t^2 < 1,
\end{cases}
\end{equation*}
on the set
$$
\mathbb{R}^2
\setminus
\{(t^1,t^2) | t^1+t^2 =1\}
$$
and $x(0,0)=0$. Here we recognize an infinity of solutions since $a$ is an arbitrary point.

The foregoing solution $x(\cdot)$ can be extended to a continuous function at $(a,1-a)$,
but in rest the function $x(\cdot)$
is discontinuous on the straight line $\{(t^1,t^2) | t^1+t^2 =1\}$
(for any given values on this straight line).

\center

\end{document}